\documentclass[10pt,reqno]{amsart}
\usepackage{amsmath,amsopn,amssymb,amsthm}
\usepackage[cp1251]{inputenc}
\usepackage[english]{babel}
\usepackage[pagebackref,breaklinks=true,colorlinks=true,linkcolor=blue,citecolor=blue,urlcolor=blue]{hyperref}
\usepackage{graphicx}%
\usepackage{color}

\voffset -1.3cm
\hoffset -1.1cm
\textwidth 14.5cm
\textheight 22cm
\newtheorem{theorem}{Theorem}[section]

\newtheorem{lemma}[theorem]{Lemma}

\newtheorem{proposition}[theorem]{Proposition}
\newtheorem{remark}[theorem]{Remark}

\renewenvironment{proof}[1][Proof]{\noindent\textbf{#1.} }{\ \rule{0.5em}{0.5em}}

\begin{document}
\title[Homogeneous conic Landsberg Problem in two dimension]{Special left invariant conic Finsler metrics and homogeneous conic Landsberg Problem in two dimension}
\author{Ming Xu}
\address[Ming Xu]{
School of Mathematical Sciences,
Capital Normal University,
Beijing 100048,
P. R. China}
\email{mgmgmgxu@163.com}

\date{}

\begin{abstract}
In this paper, we study left invariant conic Finsler metrics on the 2-dimensional non-Abelian Lie group $G$ with nowhere vanishing spray vector fields, and classify those satisfying the constant curvature condition, the Landsberg condition or the Berwald condition respectively. We prove that any left invariant conic Landsberg metric on $G$ must be Berwald. This discovery suggest a homogeneous conic Landsberg Problem, i.e., searching for homogeneous conic Landsberg metrics and exploring if they are Berwald. We solve the 2-dimensional case of this problem
and prove that all homogeneous conic Landsberg surfaces are Berwald.

\textbf{Mathematics Subject Classification (2020)}: 34A34, 53B40, 53C60

\textbf{Key words}: Berwald metric, constant curvature, Landsberg metric, Landsberg Problem,  left invariant conic Finsler metric, spray vector field,
\end{abstract}

\maketitle

\section{Introduction}
Classifying {\it space forms}, i.e., metrics with constant (flag) curvature, and searching for {\it unicorns} \cite{Ba2007}, i.e., Landsberg metrics which are not Berwald, are two hot projects in Finsler geometry, which have been extensively studied.

Here are some remarkable achievements. H. Akber-Zadeh proved that,  on a closed manifold, any Finsler metric with constant curvature must be Riemannian or locally Minkowski, if that constant is negative or zero respectively \cite{Ak1988} (this result is usually called the {\it Akber-Zadeh Theorem}). D. Bao, C. Robles and Z. Shen classified Randers metrics with constant flag curvature \cite{BRS2004}. R. Bryant found non-Riemannian projective flat Finsler metrics on spheres with constant flag curvature \cite{Br1997,Br2002}.   Projective flag Finsler metrics with constant flag curvature were further studied by B. Li, Z. Shen, L. Zhou and others \cite{Li2014,Sh2003,Zh2012}. The geodesic behavior of a Finsler 2-sphere with constant curvature was firstly noticed by A. Kotok \cite{Ka1973} and recently studied by R. Bryant and his coworkers \cite{BFIMZ}. The Landsberg Conjecture \cite{Ma1996}, which guesses that smooth Landsberg metrics are always Berwald, was proved by S. Deng, B. Li, X. Mo, Z. Shen, M. Xu and others, for $(\alpha,\beta)$ metrics, $(\alpha_1,\alpha_2)$ metrics, generalized $(\alpha,\beta)$ metrics, spherically symmetric metrics, etc, when the dimension is bigger than $2$ \cite{MZ2014,Sh2009,XD2021,ZWL2019}. Recently, V. Matveev and M. Xu proposed a unified  geometric proof for the Landsberg Conjecture in above cases \cite{XM2022}. The Landsberg Conjecture is not correct when the metric permits singularities. G. Asanov, B. Li, Z. Shen and others constructed many singular unicorns \cite{As2006,Sh2009,ZWL2019}.

In homogeneous Finsler geometry, these two projects are also studied. For example, M. Xu studied homogeneous Finsler spheres with constant curvature \cite{Xu2018,Xu2019}.
Deng and M. Xu proposed a homogeneous Landsberg Conjecture in \cite{XD2021}, which guesses that each homogeneous Landsberg metric must be Berwald. Recently, B. Najafi and A. Tayebi proved the homogeneous Landsberg Conjecture in the $2$-dimensional case \cite{TN2021} (see \cite{Xu2022} for an alternative proof).

Affected by the work \cite{TN2021}, we study in this paper the homogeneous Finsler geometry in the $2$-dimensional case. Notice that the $2$-dimensional homogeneous Finsler manifold is very rare. If it is neither Riemannian with positive constant curvature nor locally Minkowski, then it must be the $2$-dimensional
non-Abelian Lie group $G$ equipped with a left invariant metric (to avoid iteration, the Lie group $G$
in this section will always be assumed to be this one).

We concern left invariant conic Finsler metrics with certain geometric properties. A conic Finsler metric on a smooth manifold $M$ is only defined in a conic open subset of $TM\backslash0$, and it satisfies similar requirements as for a globally defined Finsler metric \cite{JV2018}. Most geometric notions in general and homogeneous Finsler geometry are micro-locally defined, i.e., their definitions at any $(x,y)\in T_xM\backslash0$ is only relevant to the metric or spray data in an arbitrarily small neighborhood of $(x,y)$ in $TM\backslash0$. These notions can be naturally generalized to the conic context. This generalization can bring us some interesting new examples. For example,  the constant curvature metrics on surfaces found by R. Bryant, L. Huang and X. Mo are conic Finsler metrics \cite{BHM2017,HM2015}. The singular unicorns previously mentions can also be viewed as conic Finsler metrics. Among the metrics in the main results below, i.e., left invariant conic Finsler metrics on $G$ satisfying the constant curvature property, the Landsberg property or
the Berwald property respective, most are non-Riemannian and they can not be defined globally, either by
either an analog of the Akbar-Zadeh Theorem or by Theorem 1.1 in \cite{TN2021}.

Now we summarize the main results in this paper.
Firstly, we classify left invariant conic Finsler metrics $F$ on $G$ with constant flag curvature. \bigskip

\noindent{\bf Theorem A}\quad
{\it For any $c\in\mathbb{R}$ and $y_0\in\mathfrak{g}\backslash[\mathfrak{g},\mathfrak{g}]$, there is a one-to-one correspondence between the following two sets:
\begin{enumerate}
\item the set of all left invariant conic Finsler metrics $F$ which are defined around $y_0$ with constant flag curvature $K\equiv c$ and nowhere vanishing spray vector fields;
\item the open subset $\mathcal{M}=\{(a_0,a_1,a_2,a_3)|a_0>0,a_1\neq0, 2a_0a_2-a_1^2+4a_0^2>0\}\subset\mathbb{R}^4$.
\end{enumerate}}\bigskip

The correspondence in Theorem A needs some explanation.
A left invariant conic Finsler metric $F$ on $G$ is one-to-one determined by its restriction $F=F(e,\cdot)$, which is a conic Minkowski norm on $\mathfrak{g}$. Using some polar coordinate $(r,t)$, $F$ can be presented as $F=r\sqrt{2f(t)}$, which is defined for $t$ close to $0$. Two left invariant conic Finsler metrics $F_1=r\sqrt{2f_1(t)}$ and $F_2=r\sqrt{f_2(t)}$ are viewed as the same if $f_1(t)$ and $f_2(t)$ coincide around $0$. The ordinary differential equation (ODE in short) for the unknown $f(t)$, characterizing the condition $K\equiv c$, is the fourth order equation given in (\ref{020}), and $\mathcal{M}$
is the set of all possible initial values $(f(0),\tfrac{{\rm d}}{{\rm d}t}f(0), \tfrac{{\rm d^2}}{{\rm d}t^2}f(0),\tfrac{{\rm d}^3}{{\rm d}t^3}f(0))$. Each initial value in $\mathcal{M}$ uniquely determines $f(t)$ and then $F$.

Theorem A reverifies the work \cite{HM2015} of L. Huang and X. Mo, which classifies these conic Finsler metrics for the first time. Compared with \cite{HM2015}, the classification here is more explicit and straightforward. Notice that different metrics in Theorem A may be viewed as the same through a diffeomorphic action. So the parameter numbers in Theorem A and in \cite{HM2015} are different.

Nextly, we classify the left invariant conic Landsberg metrics $F$ on $G$.
We prove those metrics can also be parametrized by the same $\mathcal{M}$. \bigskip

\noindent
{\bf Theorem B}\quad
{\it For any $y_0\in\mathfrak{g}\backslash[\mathfrak{g},\mathfrak{g}]$, there is a one-to-one correspondence between the following two sets:
\begin{enumerate}
\item the set of left invariant conic Landsberg metrics $F$ which are defined around $y_0$ and have nowhere vanishing spray vector fields;
\item The open subset $\mathcal{M}=\{(a_0,a_1,a_2,a_3)|a_0>0,a_1\neq0, 2a_0a_2-a_1^2+4a_0^2>0\}\subset\mathbb{R}^4$.
\end{enumerate}
}\bigskip

The correspondence in Theorem B is similar to that in Theorem A. The fourth order ODE for the unknown $f(t)$ in $F=r\sqrt{2f(t)}$, which charaterizes the Landsberg property, is given in (\ref{017}), and $\mathcal{M}$ is the space of all possible initial values.

Thirdly, we discuss when the left invariant conic Finsler metric $F$ on $G$ is Berwald.
\bigskip

\noindent
{\bf Theorem C}\quad
{\it Let $\{e_1,e_2\}$ be a basis of $\mathfrak{g}$ satisfying $[e_1,e_2]=e_2$, and $F$ a left invariant conic Finsler metric on $G$ which is defined around $e_1\in\mathfrak{g}$. If $F$ is Berwald has a nowhere vanishing spray vector field, then we can find $(a,b,c,d)\in\mathbb{R}^4$ satisfying $ac\neq0$ and $ad-bc>0$, such that
\begin{equation}\label{028}
(ay^1+by^2)\tfrac{\partial}{\partial y^1}F+(cy^1+dy^2)\tfrac{\partial}{\partial y^2}F=0,\quad
\forall y=y^1 e_1+y^2 e_2\in\mathfrak{g}\backslash\{0\}.
 \end{equation}
 Conversely, any $(a,b,c,d)\in\mathbb{R}^4$ satisfying $ac\neq0$ and $ad-bc>0$ defines by (\ref{028}) a left invariant conic Berwald metric $F$ around $e_1$.
}\bigskip

Theorem C is a summarization for Lemma \ref{lemma-8} and Proposition \ref{prop-1}. The metrics in Theorem C can be explicitly determined, because its indicatrix $F=1$
in $\mathfrak{g}$ is a segment of a spiral curve, an ellipsoid, the graph of $y^1=c(y^2)^\mu$ with constants $c>0,\mu>1$, or
the graph of $y^2=c_1y^1\ln y^1+c_2 y^1$ with the constant $c_1>0$. Here $(y^1,y^2)$ is some linear coordinate on $\mathfrak{g}$. See the case-by-case discussion in Section \ref{subsection-3-4} for details.

Finally, we compare the sets of initial value data for Theorem B and Theorem C. Then we find that they in fact correspond to the same sets of left invariant conic Finsler metrics (see Claim and its proof  in Section \ref{subsection-3-5}). This critical observation immediately results the following theorem.
\bigskip

\noindent
{\bf Theorem D}\quad
{\it Any left invariant conic Landsberg Finsler metric on $G$ is Berwald.
}\bigskip

Notice that a homogeneous conic Finsler metric may not be complete, the Akbar-Zahed's technique in \cite{TN2021} or the proof of Theorem 4.3 in \cite{Xu2022} does not work here. We have to detour through Theorem B and Theorem C, which explicitly classify all left invariant conic Landsberg and Berwald metrics on $G$.

Theorem D is a little surprising, because all known results in literature can only prove the Landsberg Conjecture or its analog from the positive side, when the metric is globally defined, or provide counter examples from the negative side, when the metric is conic or permits singularities. It suggests we could further study the following {\it homogeneous conic Landsberg Problem}.
\bigskip

\noindent{\bf Problem E}\quad
{\it Look for homogeneous conic Landsberg metrics and explore if they are Berwald.}
\bigskip

In this paper, we solve the 2-dimensional case of this problem by the following theorem, which  generalizes Theorem 1.1 in \cite{TN2021}.
\bigskip

\noindent
{\bf Theorem F}\quad
{\it All homogeneous conic Landsberg surfaces are Berwald.
}\bigskip

The proof of Theorem E uses a similar strategy as that of Theorem 4.3 in \cite{Xu2022}. We first prove that, up to local isometries, homogeneous conic Finsler surfaces are very rare (see Lemma \ref{lemma-12}), among which only left invariant conic Finsler metrics on a 2-dimensional non-Abelian Lie group need our concern, and then we apply Theorem D.
The first step here  involves a relatively long case-by-case discussion, which is much harder than its analog in \cite{Xu2022}. The reason is that the isotropy subgroup $H$ for a homogeneous conic Finsler manifold $(G/H,F)$ with an effective $G$-action may not be a compactly imbedded subgroup of $G$.
So homogeneous conic Finsler manifolds may look very different from those globally defined ones.

%
%
%
%

Above main results have a byproduct in the ODE theory. The equation (\ref{017}) characterizing the Landsberg property for the conic metric $F$ in Theorem B is a nonlinear ODE, and it is hard to solve in general. However, Theorem D tells us that we do not need to solve it, because all solutions (satisfying certain requirement for the initial values) have already been found. Theorem C implies that these solutions can be explicitly described.

We ends the introduction with some remark on the tools we use. In this paper, we have applied a systematical approach in homogeneous Finsler geometry. We first use the spray vector field and connection operator invented by L. Huang \cite{Hu2015-1,Hu2015-2,Hu2017} to interpret the geodesics, curvatures, or other geometric notions and properties \cite{Xu2022,Xu2022-2,Xu2022-3}, and then translate the interpretation to an algebraic condition or an equation on a Lie algebra. Through this process, geometric problems can be very much simplified. This paper can be viewed another trial for the effectiveness of the above machinery.

This paper is organized as the following. In Section 2, we summarize some background knowledge and necessary tools in general and homogeneous Finsler geometry. In Section 3, we study left invariant conic Finsler metrics on a 2-dimensional non-Abelian Lie group, and prove all the main results. In Section 4, we prove Theorem E


\section{Preliminaries}

\subsection{Minkowski norm and Finsler metric}

A {\it Minkowski norm} on a real vector space $\mathrm{V}$, $\dim\mathrm{V}=n<+\infty$,
is a continuous function $F:\mathrm{V}\rightarrow\mathbb{R}_{\geq0}$ satisfying:
\begin{enumerate}
\item the regularity, i.e., $F|_{\mathrm{V}\backslash\{0\}}$ is positive and smooth;
\item the positive 1-homogeneity, i.e., $F(\lambda y)=\lambda F(y)$, $\forall \lambda\geq0, y\in\mathrm{V}$;
\item the strong convexity, i.e., for any basis $\{e_1,\cdots,e_n\}$ of $\mathrm{V}$ and the corresponding linear coordinate $y=y^ie_i$, the Hessian matrix
    $(g_{ij})=(\tfrac12[F^2]_{y^iy^j})$ is positive definite, or equivalently, the fundamental tensor
    $$g_y(u,v)=\tfrac12\tfrac{\partial^2}{\partial s\partial t}|_{s=t=0}F^2(y+su+tv),\quad\forall u,v\in\mathrm{V},$$
is an inner product, for any $y\in\mathrm{V}\backslash\{0\}$.
\end{enumerate}

A {\it Finsler metric} on a smooth manifold $M$ is a continuous function $F:TM\rightarrow\mathbb{R}_{\geq0}$ such that $F|_{TM\backslash0}$ is a positive smooth function and $F|_{T_xM}$ for each $x\in M$ is a Minkowski norm.

The Finsler metric $F$ on $M$ induces a {\it geodesic spray} $\mathbf{G}_F$, which is a smooth tangent vector field on $TM\backslash0$. Using any standard local coordinate $x=(x^i)\in M$ and
$y=y^i\partial_{x^i}\in T_xM$, $\mathbf{G}_F$ can be presented as
\begin{equation}\label{006}
\mathbf{G}_F=y^i\partial_{x^i}-2\mathbf{G}^i_F\partial_{y^i},\mbox{ where } \mathbf{G}^i_F=\mathbf{G}^i_F(x,y) \mbox{ is positively 2-homogeneous for }y,
\end{equation}
and it can be determined by
\begin{equation*}
\mathbf{G}^i_F=\tfrac14g^{il}([F^2]_{x^ky^l}y^k-[F^2]_{x^l}).
\end{equation*}

Notice that (\ref{006}) alone defines a more general {\it spray structure}  on $M$. Since
many notions in Finsler geometry, geodesic, linearly and nonlinearly parallel translations, Riemannian curvature, S-curvature, etc, are defined through $\mathbf{G}_F$, they can be naturally generalized to spray geometry.

See \cite{BCS2000,Sh2001} for more details.
\subsection{Left invariant Finsler metric on a Lie group}

Let $G$ be a Lie group. We denote by $e$ the unit element of $G$, $\mathfrak{g}$ its Lie algebra, and $\{e_1,\cdots,e_n\}$ a basis of $\mathfrak{g}$. We call a Finsler metric $F$ on $G$ {\it left invariant} if $(L_g)^*F=F$ for all $ g\in G$. Here $L_g$ denotes left translation. This is a special case of homogeneous Finsler metric,
where the homogeneous manifold is $G/H=G/\{e\}$, and the reductive decomposition is the unique one $\mathfrak{g}=\mathfrak{h}+\mathfrak{m}=0+\mathfrak{g}$. So all theories in homogeneous Finsler geometry \cite{De2012}
can be applied and simplified here. For example, the left invariant $F$ on $G$ is one-to-one determined by its restriction $F=F(e,\cdot)$, which can be any arbitrary {\it Minkowski norm} on $\mathfrak{g}=T_eG$.

\subsection{Spray vector field and connection operator}

In \cite{Hu2015-1}, L. Huang proposed the notions of spary vector field and connection operator in  homogeneous Finsler geometry.

For a left invariant $F$ on $G$, its {\it spray vector field} is
the smooth map
$\eta:\mathfrak{g}\backslash\{0\}\rightarrow\mathfrak{g}$ determined by
\begin{equation}\label{008}
g_y(\eta(y),u)=g_y(y,[u,y]),\quad\forall y\in\mathfrak{g}\backslash\{0\},u\in\mathfrak{g}.
\end{equation}

\begin{remark}\label{remark-1}
Theorem 3.1 in \cite{XD2014} reveals the relation between $\eta$ and $\mathbf{G}_F$.
For a left invariant Finsler metric $F$ on $G$, the geodesic spray $\mathbf{G}_F$ can be globally presented as
$\mathbf{G}_F=\mathbf{G}_0-\mathbf{H}$, in which $\mathbf{G}_0$ is the canonical affine bi-invariant spray structure (see Theorem A in \cite{Xu2022}), $\mathbf{H}$ is tangent to each $T_gG$, and $\eta=\mathbf{H}|_{T_eG\backslash\{0\}=\mathfrak{g}\backslash\{0\}}$.
\end{remark}

The {\it connection operator} $N:\mathfrak{g}\backslash\{0\}\times\mathfrak{g}
\rightarrow\mathfrak{g}$ for $(G,F)$ can be determined by
\begin{eqnarray}
g_y(N(y,v),u)&=&g_y([u,v],y)+g_y([u,y],v)+g_y([v,y],u)\nonumber\\
& &-2\mathbf{C}_y(u,v,\eta(y)),\quad\forall y\in\mathfrak{g}\backslash\{0\}, u,v\in\mathfrak{g},\label{009}
\end{eqnarray}
or equivalently $N(y,v)=\tfrac12D\eta(y,v)-\tfrac12[y,v]$
with $D\eta(y,v)=\tfrac{{\rm d}}{{\rm d} t}|_{t=0}\eta(y+tv)$ (see (4) in \cite{Hu2017}).
Here $\mathbf{C}_y(u,v,w)=\tfrac12\tfrac{{\rm d}}{{\rm d} t}|_{t=0}g_{y+tw}(u,v)=\tfrac14 \tfrac{\partial^3}{\partial r\partial s\partial t}|_{r=s=t=0}F^2(y+ru+sv+tw)$ is the {\it Cartan tensor}.
\subsection{Geodesic and curvature for a left invariant metric}

Geometric notions and properties of a left invariant Finsler metric $F$ on a Lie group $G$ can be described using the
spray vector field $\eta$ and the connection operator $N$.

Theorem D in \cite{Xu2022} (together with Remark \ref{remark-1}, same below) described geodesics on $(G,F)$, which can be reformulated as the following.

\begin{theorem}\label{thm-1}
Let $\eta$ be the spray vector field for $(G,F)$. Then there exists a one-to-one correspondence between the following two sets:
\begin{enumerate}
\item the set of geodesics $c(t)$ on $(G,F)$ with $F(\dot{c}(t))\equiv\mathrm{const}>0$ and $c(0)=e$;
\item the set of integral curves $y(t)$ of $-\eta$ in $\mathfrak{g}$.
\end{enumerate}
The correspondence is given by $y(t)=(L_{c(t)^{-1}})_*(\dot{c}(t))$.
\end{theorem}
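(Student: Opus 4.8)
\textbf{Proof proposal for Theorem \ref{thm-1}.}

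The plan is to reduce the statement to the computation in Remark \ref{remark-1}, which already decomposes $\mathbf{G}_F=\mathbf{G}_0-\mathbf{H}$ with $\mathbf{H}|_{\mathfrak{g}\backslash\{0\}}=\eta$. A curve $c(t)$ in $G$ with $c(0)=e$ and $F(\dot c(t))\equiv\mathrm{const}>0$ is a (constant-speed) geodesic exactly when its canonical lift $(c(t),\dot c(t))$ is an integral curve of $\mathbf{G}_F$ on $TG\backslash 0$. So I would first transport everything to $\mathfrak{g}$ via the left trivialization $TG\cong G\times\mathfrak{g}$, writing $y(t)=(L_{c(t)^{-1}})_*\dot c(t)$. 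The first component of the lifted equation, coming from the $y^i\partial_{x^i}$ part of any spray, just reproduces the tautological relation $\dot c(t)=(L_{c(t)})_*y(t)$, i.e. it says $y(t)$ \emph{is} the left-logarithmic derivative of $c(t)$; this is automatically satisfied and carries no information beyond defining the correspondence $c\leftrightarrow y$. The content is in the second component.

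Next I would compute that second component. The canonical bi-invariant spray $\mathbf{G}_0$ has, in left-trivialized coordinates, geodesics that are exactly the one-parameter subgroups, so along a curve with left-logarithmic derivative $y(t)$ its contribution to $\dot y(t)$ is a term that, by bi-invariance, I can read off at $e$; combined with the $\mathbf{H}$-term whose restriction to $\mathfrak{g}\backslash\{0\}$ is $\eta$, the geodesic equation becomes (after the standard bookkeeping for how $\mathbf{H}$ transforms under left translation, which is exactly what makes $\mathbf{H}$ ``tangent to each $T_gG$'' in Remark \ref{remark-1}) simply $\dot y(t)=-\eta(y(t))$. Here the homogeneity of $\eta$ and the fact that $F(\dot c(t))=F(y(t))$ is constant along such a curve must be used to see that the $\mathbf{G}_0$-contribution cancels or is absorbed, so that what survives is precisely the integral-curve equation for $-\eta$. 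Conversely, given an integral curve $y(t)$ of $-\eta$ in $\mathfrak{g}$, I would solve $\dot c(t)=(L_{c(t)})_*y(t)$, $c(0)=e$ (solvable since the right-hand side is a time-dependent left-invariant vector field, and the solution exists as long as $y(t)$ does), check that $F(\dot c(t))$ is constant — this follows because $g_{y}(\eta(y),y)=g_y(y,[y,y])=0$ by \eqref{008}, so $\tfrac{d}{dt}F^2(y(t))=2g_{y(t)}(\dot y(t),y(t))=-2g_{y(t)}(\eta(y(t)),y(t))=0$ — and that $c(t)$ is then a geodesic by running the previous computation backwards.

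I would also note the bijectivity: the map $c\mapsto y$ is injective because $c$ is recovered from $y$ as the unique solution of $\dot c=(L_c)_*y$ with $c(0)=e$, and surjective by the converse construction just described, so the two sets are genuinely in one-to-one correspondence (with matching speed $F(y(0))$).

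The main obstacle I expect is the bookkeeping in the middle paragraph: correctly tracking how the non-tensorial spray coefficients $\mathbf{G}^i_F$ behave under the left trivialization, and verifying that the $\mathbf{G}_0$-piece contributes nothing to $\dot y$ beyond what is already encoded in the trivialization. This is purely a matter of unwinding Remark \ref{remark-1} and Theorem D of \cite{Xu2022} rather than a new idea, but it is where sign conventions and the precise meaning of ``$\mathbf{H}$ tangent to each $T_gG$'' have to be pinned down; everything else (the speed being constant, existence/uniqueness of $c$ given $y$) is routine once \eqref{008} is in hand.
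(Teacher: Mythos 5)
Your outline is essentially correct, but note that the paper itself gives no proof of Theorem \ref{thm-1}: it is quoted as a reformulation of Theorem D of \cite{Xu2022} together with Remark \ref{remark-1}, so there is nothing in this paper to compare against except the decomposition $\mathbf{G}_F=\mathbf{G}_0-\mathbf{H}$ that you correctly take as your starting point. Your reduction (lift the geodesic to an integral curve of $\mathbf{G}_F$, left-trivialize $TG\cong G\times\mathfrak{g}$, observe that the horizontal component is tautological and the vertical component becomes $\dot y=-\eta(y)$) is exactly the argument behind the cited result, and your verification in the converse direction that $F(y(t))$ is constant via $g_y(\eta(y),y)=g_y(y,[y,y])=0$ from (\ref{008}) is correct and is the right way to recover the normalization $F(\dot c)\equiv\mathrm{const}>0$. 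One imprecision: you suggest that the homogeneity of $\eta$ and the constancy of $F(\dot c)$ are needed to make the $\mathbf{G}_0$-contribution ``cancel or be absorbed.'' Neither is needed for that step: the defining property of the canonical bi-invariant spray $\mathbf{G}_0$ is that its geodesics are precisely the curves whose left-logarithmic derivative is constant, so in the left trivialization $\mathbf{G}_0$ contributes exactly $\dot y=0$, identically and for any curve; the entire vertical dynamics then comes from $-\mathbf{H}$, whose left invariance reduces it to $-\eta$ at $e$. With that point tightened, the sketch is a complete and correct proof strategy.
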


Theorem 1.1 in \cite{Xu2022-2} characterizes linearly parallel translations, which can be reformulated as the following (see Theorem C in \cite{Xu2022-3} for its generalization).

\begin{theorem}\label{thm-3}
Let $\eta$ and $N$ be the spray vector field and connection operator for $(G,F)$. Suppose $c(t)$ is a geodesic with $y(t)=(L_{c(t)^{-1}})_*(\dot{c}(t))$. Then the smooth vector field $W(t)$ along $c(t)$ is linearly parallel if and only if $w(t)=(L_{c(t)^{-1}})_*(W(t))$ satisfies
\begin{equation*}
\tfrac{{\rm d}}{{\rm d} t}w(t)+N(y(t),w(t))+[y(t),w(t)]=0
\end{equation*}
for each value of $t$.
\end{theorem}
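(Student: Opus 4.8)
The plan is to unwind the spray-geometric definition of linear parallel translation along $c$, transport the resulting ODE to the Lie algebra via the left trivialization $TG\cong G\times\mathfrak g$, and identify the resulting correction operator with $N(y,\cdot)+[y,\cdot]$. Recall first that for any spray, a vector field $W(t)=W^i(t)\partial_{x^i}$ along a curve $c(t)$ with velocity $v(t)=\dot c(t)$ is \emph{linearly parallel} exactly when
\[
\dot W^i(t)+\bigl(\partial_{y^j}\mathbf G^i_F\bigr)\!\bigl(c(t),v(t)\bigr)\,W^j(t)=0
\]
for all $t$, with $\mathbf G^i_F$ as in (\ref{006}); here one must use the \emph{linear} (not the nonlinear) parallel transport attached to $\mathbf G_F$, since the latter would give a different, genuinely nonlinear ODE. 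Because $F$ is left invariant, the nonlinear connection coefficients $\partial_{y^j}\mathbf G^i_F$ are left invariant as well, so after writing $W(t)=(L_{c(t)})_*w(t)$ and $v(t)=(L_{c(t)})_*y(t)$ and differentiating through $(L_{c(t)^{-1}})_*$ in a left-invariant frame, the displayed equation becomes $\dot w(t)+A\bigl(y(t)\bigr)w(t)=0$ for a single endomorphism-valued map $y\mapsto A(y)$ on $\mathfrak g$ built only from the spray data at $e$. Thus everything reduces to the algebraic identity $A(y)w=N(y,w)+[y,w]$.

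To compute $A(y)$ I would use the decomposition $\mathbf G_F=\mathbf G_0-\mathbf H$ of Remark \ref{remark-1}, which on coefficients reads $\mathbf G^i_F=\mathbf G^i_0+\tfrac12\mathbf H^i$. The affine part $\mathbf G_0$ is the bi-invariant canonical spray, i.e. the spray of the Cartan $(0)$-connection with $\nabla^0_XY=\tfrac12[X,Y]$ on left-invariant vector fields, so its contribution to $A(y)$, read in the left trivialization, is $w\mapsto\tfrac12[y,w]$. The remainder $\mathbf H$ is tangent to the fibers $T_gG$, and by left invariance its restriction to $T_gG$ corresponds under $(L_{g^{-1}})_*$ precisely to the map $\eta$ — this is the content of $\eta=\mathbf H|_{T_eG\setminus\{0\}}$ in Remark \ref{remark-1} — so $\tfrac12\partial_{y^j}\mathbf H^i$ contributes $w\mapsto\tfrac12 D\eta(y,w)$, where $D\eta(y,w)=\tfrac{{\rm d}}{{\rm d}t}|_{t=0}\eta(y+tw)$. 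Adding the two pieces gives $A(y)w=\tfrac12[y,w]+\tfrac12 D\eta(y,w)$. Finally I would invoke the identity $N(y,w)=\tfrac12 D\eta(y,w)-\tfrac12[y,w]$ recorded after (\ref{009}), which rearranges to $\tfrac12 D\eta(y,w)=N(y,w)+\tfrac12[y,w]$; substituting yields $A(y)w=N(y,w)+[y,w]$, so the parallel-transport equation $\dot w+A(y)w=0$ is exactly the stated one.

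The main obstacle is the bookkeeping in the first paragraph: relating the coordinate time derivative $\dot W^i$ along $c$ to the $\mathfrak g$-valued derivative $\dot w(t)$, the translation $(L_{c(t)^{-1}})_*$, and the point at which the (degree-one homogeneous) nonlinear connection is evaluated, so that the flat, left-invariant-frame derivative separates cleanly from the connection term and collapses into the single algebraic operator $A(y)$; and, in tandem, pinning down every sign and factor of $\tfrac12$ — those in (\ref{006}), in the definitions (\ref{008}) and (\ref{009}) of $\eta$ and $N$, and in $\mathbf G_F=\mathbf G_0-\mathbf H$ — so that the two contributions to $A(y)$ assemble with exactly the coefficients above. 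Once the conventions are fixed the remaining steps are routine, and the statement is, as the surrounding text indicates, just the left-invariant specialization of Theorem 1.1 in \cite{Xu2022-2} through Remark \ref{remark-1}.
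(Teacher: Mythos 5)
Your argument is correct, but it is worth noting that the paper itself offers no proof of this statement: Theorem \ref{thm-3} is presented as a reformulation of Theorem 1.1 in \cite{Xu2022-2} (with Theorem C of \cite{Xu2022-3} cited for a generalization), so the "paper's proof" is a citation. What you supply is a self-contained derivation, and it uses exactly the ingredients the paper itself relies on elsewhere: the decomposition $\mathbf{G}_F=\mathbf{G}_0-\mathbf{H}$ of Remark \ref{remark-1} (the same splitting used in the proof of Lemma \ref{lemma-7}) and the identity $N(y,v)=\tfrac12 D\eta(y,v)-\tfrac12[y,v]$ recorded after (\ref{009}). Your bookkeeping is right: $\mathbf{G}^i_F=\mathbf{G}^i_0+\tfrac12\mathbf{H}^i$, the canonical bi-invariant spray contributes $\tfrac12[y,w]$ in the left trivialization (parallel transport for $\nabla^0_XY=\tfrac12[X,Y]$), the vertical left-invariant part contributes $\tfrac12 D\eta(y,w)$, and the sum $\dot w+\tfrac12[y,w]+\tfrac12 D\eta(y,w)=0$ rearranges to the stated equation. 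Two consistency checks confirm the signs: putting $w=y$ and using $N(y,y)=\eta(y)$ recovers $\dot y=-\eta(y)$, i.e.\ Theorem \ref{thm-1}; and along an integral curve of $-\eta$ the equation is equivalent to $N(y(t),w(t))=-\mathrm{L}_\eta w(t)$, i.e.\ the first identity of Theorem \ref{thm-2}. The only point to make fully rigorous is the one you flag: the additive splitting of the Berwald connection coefficients across the two summands is legitimate because the difference of two sprays is a vertical, positively $2$-homogeneous field whose fiber derivative is a genuine $(1,1)$-tensor, so the frame-change computation need only be done once, for $\mathbf{G}_0$. With that observed, your proof is complete and has the advantage of making the paper self-contained where it currently defers to \cite{Xu2022-2}.
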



For the Riemann curvature of $(G,F)$, we will need the following description (see Theorem 1.2 in \cite{Xu2022-2} or Theorem F in \cite{Xu2022-3}).

\begin{theorem}\label{thm-2}
Let $\eta$ and $N$ be the spray vector field and the connection operator for $(G,F)$, $c(t)$ a geodesic with $y(t)=(L_{c(t)^{-1}})_*(\dot{c}(t))$, and $W(t)=(L_{c(t)})_*(w(t))$ is linearly parallel along $c(t)$. Then $N(t)=N(y(t),w(t))$ and $R(t)=\mathbf{R}_{y(t)}w(t)=(L_{c(t)^{-1}})_*(\mathbf{R}_{\dot{c}(t)}W(t))$, which are viewed as smooth vector fields along $y(t)$, are given by
$$N(t)=-\mathrm{L}_{\eta}w(t)\quad\mbox{and}\quad R(t)=\mathrm{L}_\eta N(t),$$
in which $\mathrm{L}$ denotes the Lie derivative.
\end{theorem}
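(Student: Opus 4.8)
The plan is to verify the two formulas $N(t)=-\mathrm{L}_\eta w(t)$ and $R(t)=\mathrm{L}_\eta N(t)$ separately: the first is a short computation from Theorem \ref{thm-3}, while the second requires the spray-theoretic description of the Riemann curvature. Throughout I read $w(t)$ and $N(t)$ as vector fields along the integral curve $y(t)$ of $-\eta$ (Theorem \ref{thm-1}, so $\dot y(t)=-\eta(y(t))$), and I interpret $\mathrm{L}_\eta V$, for such a field $V$, as the Lie bracket $[\eta,V]$ evaluated along the curve. The point that makes this well defined is that $\eta(y(t))=-\dot y(t)$ is tangent to the curve: hence in $[\eta,V]=DV(\eta)-D\eta(y,V)$ the first term needs only the values of $V$ along $y(t)$ and equals $-\dot V(t)$, while the second uses the everywhere-defined field $\eta$. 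Thus $\mathrm{L}_\eta V(t)=-\dot V(t)-D\eta(y(t),V(t))$, and no transverse extension of $V$ is needed.

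For the first formula I put $V=w$. Theorem \ref{thm-3} gives $\dot w(t)+N(y(t),w(t))+[y(t),w(t)]=0$, and the alternative expression for $N$ recorded just after (\ref{009}) yields $D\eta(y,v)=2N(y,v)+[y,v]$. Then
\[
-\mathrm{L}_\eta w(t)=\dot w(t)+D\eta(y(t),w(t))=\big(-N(t)-[y,w]\big)+\big(2N(t)+[y,w]\big)=N(t),
\]
which is exactly $N(t)=-\mathrm{L}_\eta w(t)$.

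For the second formula the content, after unwinding the interpretation above, is the identity $\mathbf{R}_{y(t)}w(t)=-\dot N(t)-D\eta(y(t),N(t))$ with $N(t)=N(y(t),w(t))$. Here I need an independent description of $\mathbf{R}_y$, which I take from the spray: by Remark \ref{remark-1} the geodesic spray splits as $\mathbf{G}_F=\mathbf{G}_0-\mathbf{H}$ with $\mathbf{G}_0$ the bi-invariant affine spray and $\mathbf{H}|_{\mathfrak{g}\backslash\{0\}}=\eta$, so the geodesic flow, transported to $\mathfrak{g}$ by left translation, is precisely the flow of $-\eta$. The Riemann curvature is the obstruction in the Jacobi equation, i.e. the linearization of this flow; linearizing $\dot y=-\eta(y)$ produces the operator $V\mapsto \dot V+D\eta(y,V)$, which is exactly $-\mathrm{L}_\eta$. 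I would therefore run the standard spray-curvature computation in the left-invariant frame, feed in the parallel equation of Theorem \ref{thm-3} and the already-proven first formula, and show that the curvature operator assembles into $R(t)=-\mathrm{L}_\eta^2 w(t)$; since $\mathrm{L}_\eta w(t)=-N(t)$, this reads $R(t)=\mathrm{L}_\eta N(t)$.

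The main obstacle is this second formula, and specifically the reference-vector dependence of the Finsler structure. The operator $\mathbf{R}_y$ is built from $\mathbf{G}_F$ by an expression mixing $x$- and $y$-derivatives of the spray coefficients, and $N$ in (\ref{009}) itself carries the Cartan-tensor term $-2\mathbf{C}_y(u,v,\eta(y))$. The technical heart is to show that, after the left-translation trivialization and repeated use of $\dot y=-\eta(y)$, all of these contributions — the nonlinear-connection terms, the curvature of $\mathbf{G}_0$, and the Cartan corrections — organize exactly into the single vector $-\dot N(t)-D\eta(y(t),N(t))$, with no residual term surviving. Carrying out this bookkeeping, rather than any conceptual difficulty, is where the real work lies; the first formula, by contrast, is essentially immediate once the interpretation of $\mathrm{L}_\eta$ along the integral curve is fixed.
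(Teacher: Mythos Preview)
The paper does not prove this theorem: it is quoted in the preliminaries with the attribution ``see Theorem 1.2 in \cite{Xu2022-2} or Theorem F in \cite{Xu2022-3}'' and no argument is given here. So there is no in-paper proof to compare against.

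On the merits of your proposal itself: your derivation of the first formula $N(t)=-\mathrm{L}_\eta w(t)$ is clean and correct --- the interpretation of $\mathrm{L}_\eta$ along the integral curve, combined with Theorem~\ref{thm-3} and the relation $D\eta(y,v)=2N(y,v)+[y,v]$, gives exactly the one-line computation you wrote. For the second formula, however, what you have written is a plan rather than a proof. You correctly identify that the content is $\mathbf{R}_{y(t)}w(t)=-\dot N(t)-D\eta(y(t),N(t))$ and that this must come from the spray-theoretic Riemann curvature formula, but the phrase ``run the standard spray-curvature computation in the left-invariant frame'' hides the entire argument: one must actually write $\mathbf{R}_y$ in terms of the spray coefficients, substitute $\mathbf{G}_F=\mathbf{G}_0-\mathbf{H}$, and track how the bi-invariant piece $\mathbf{G}_0$, the $\eta$-derivatives, and the bracket terms combine. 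Your own closing paragraph concedes that this bookkeeping is ``where the real work lies,'' which is accurate --- so at present the second half is an outline, not a proof. If you want a self-contained argument you would need to carry that computation through (the cited references \cite{Xu2022-2,Xu2022-3} do exactly this).
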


For the Berwald property of $(G,F)$, we will use the following criterion.
\begin{lemma}\label{lemma-7}
For a left invariant conic Finsler metric $F$ on a Lie group $G$, the following statements are equivalent:
\begin{enumerate}
\item $F$ is Berwald, i.e., its Berwald tensor $\mathbf{B}^i_{jkl}=\tfrac{\partial^3}{\partial y^jy^ky^l}
\mathbf{G}_F^i$ vanishes everywhere;
\item the spray vector field $\eta$ of $F$ is quadratic, i.e., with respect to any basis $\{e_1,\cdots,e_n\}$ of $\mathfrak{g}=\mathrm{Lie}(G)$, all coefficients $\eta^i(y)$ in $\eta(y)=\eta^i(y)e_i$ are quadratic real functions of $y$;
\item the connection operator $N$ of $F$ is a bilinear map.
\end{enumerate}
\end{lemma}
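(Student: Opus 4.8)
The plan is to establish the cycle of implications (1)$\Rightarrow$(2)$\Rightarrow$(3)$\Rightarrow$(1), using the two characterizations of the connection operator from (\ref{009}): the "metric" one involving the Cartan tensor, and the "analytic" one $N(y,v)=\tfrac12 D\eta(y,v)-\tfrac12[y,v]$. The key observation underlying everything is Remark \ref{remark-1}: $\mathbf{G}_F=\mathbf{G}_0-\mathbf{H}$, where $\mathbf{G}_0$ is the bi-invariant affine spray (whose coefficients $\mathbf{G}_0^i$ are quadratic in $y$, coming from the Lie bracket) and $\mathbf{H}$ restricts on $T_eG$ to $\eta$. Since the Berwald tensor is the third $y$-derivative of the spray coefficients and $\mathbf{G}_0$ contributes nothing to it (being quadratic), vanishing of $\mathbf{B}$ is equivalent to the spray coefficients of $\mathbf{G}_F$ being quadratic in $y$; and because $F$ is left invariant, it suffices to check this on $T_eG=\mathfrak{g}$, where the non-quadratic part is exactly $\eta$. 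This gives (1)$\Leftrightarrow$(2) directly, once one is careful that "quadratic" here is meant as positively $2$-homogeneous plus $C^2$ forcing a genuine quadratic polynomial — the positive $2$-homogeneity of $\eta$ is automatic from (\ref{008}), so the content is smoothness/polynomiality.

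For (2)$\Rightarrow$(3): if each $\eta^i(y)$ is a quadratic polynomial, then $D\eta(y,v)=\tfrac{\rm d}{{\rm d}t}|_{t=0}\eta(y+tv)$ is bilinear in $(y,v)$ — the directional derivative of a homogeneous quadratic is bilinear — and $[y,v]$ is manifestly bilinear, so $N(y,v)=\tfrac12 D\eta(y,v)-\tfrac12[y,v]$ is bilinear. For (3)$\Rightarrow$(2): conversely, if $N$ is bilinear, then $D\eta(y,v)=2N(y,v)+[y,v]$ is bilinear in $(y,v)$; integrating, $\eta^i(y)$ has bilinear directional derivatives in every direction, and combined with the positive $2$-homogeneity of $\eta$ this forces $\eta^i$ to be a homogeneous quadratic polynomial (e.g. by Euler's relation $2\eta^i(y)=D\eta^i(y,y)$, which exhibits $\eta^i$ as a quadratic form once $D\eta^i$ is known to be bilinear). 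This closes the loop and in fact shows (1), (2), (3) are equivalent.

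The main subtlety — not a deep obstacle, but the point needing care — is the passage between "positively $2$-homogeneous and smooth on $\mathfrak{g}\backslash\{0\}$" and "restriction of a genuine quadratic polynomial on $\mathfrak{g}$". In the conic setting $\eta$ is a priori only defined and smooth on a conic open subset, so one must note that the bilinearity of $D\eta$ (equivalently of $N$) on that conic domain already pins down $\eta^i$ as the restriction of a quadratic form defined on all of $\mathfrak{g}$ via Euler's identity, and then this quadratic form extends the spray coefficient to a left invariant Berwald structure. I would also remark that this lemma is the conic analogue of the classical fact that a left invariant Finsler metric is Berwald iff its spray vector field is quadratic, and the proof is formally identical; the only thing "conic" changes is the domain on which the identities (\ref{008}), (\ref{009}) are asserted, and all the manipulations above are pointwise (micro-local), hence insensitive to shrinking the domain.
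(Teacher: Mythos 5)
Your proposal is correct and follows essentially the same route as the paper: the equivalence (1)$\Leftrightarrow$(2) via the decomposition $\mathbf{G}_F=\mathbf{G}_0-\mathbf{H}$ and left invariance, and the equivalence (2)$\Leftrightarrow$(3) via $N(y,v)=\tfrac12 D\eta(y,v)-\tfrac12[y,v]$ together with $\eta(y)=N(y,y)$ (your Euler-identity step is exactly what makes that last identity work). The extra care you take about the conic domain and homogeneity is a harmless elaboration of the paper's argument, not a different method.
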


\begin{proof}We first prove the equivalence between (1) and (2).
The metric $F$ is Berwald if and only if $\mathbf{G}_F$ is affine.
Since we have $\mathbf{G}_F=\mathbf{G}_0-\mathbf{H}=\mathbf{G}_0-\mathbf{H}^i\partial_{u^i}$
and $\mathbf{G}_0$ is affine, $\mathbf{G}_F$ is affine if and only if $\mathbf{H}$ is quadratic for its $y$-entry. Since $\mathbf{H}$ is left invariant, it is quadratic for its $y$-entry if and only if
the spray vector field $\eta=\mathbf{H}|_{T_eG\backslash\{0\}=\mathfrak{g}\backslash\{0\}}$
is quadratic.

We then prove the equivalence between (2) and (3). The connection operator $N$ is bilinear when $\eta$ is quadratic, because $N(y,v)=\tfrac12D\eta(y,v)-\tfrac12[y,v]=\tfrac12\tfrac{{\rm d}}{{\rm d} t}|_{t=0}\eta(y+tv)-\tfrac12[y,v]$. Conversely, $\eta$ is quadratic when $N$ is bilinear, because $\eta(y)=N(y,y)$.
\end{proof}

For the Landsberg property, we will only use the following well known description (see (7.16) in \cite{Sh2001-2}).

\begin{lemma}\label{lemma-1}
The Landsberg curvature on a Finsler manifold $(M,F)$, $$\mathbf{L}_y(u,v,w),\quad \forall y\in T_xM\backslash\{0\}, u,v,w\in T_xM,$$
is a symmetric multiple linear real function for its $u$-,$v$- and $w$-entries, and it can be determined by
\begin{equation}
\mathbf{L}_{\dot{c}(t)}(W(t),W(t),W(t))=
\tfrac{{\rm d}}{{\rm d} t}\mathbf{C}_{\dot{c}(t)}(W(t),W(t),W(t)),\quad\forall t,
\end{equation}
in which $c(t)$ is any geodesic and $W(t)$ is any linearly parallel vector field along $c(t)$.
\end{lemma}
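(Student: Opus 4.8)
The plan is to reduce the statement to a standard identity in Finsler geometry and then transport it to the left-invariant setting via the correspondence between geodesics, parallel fields, and integral curves of the spray vector field established in Theorems \ref{thm-1}--\ref{thm-2}. First I would recall the multilinearity and total symmetry of $\mathbf{L}_y$ in its three lower slots: this is immediate from the definition of the Landsberg tensor as the covariant rate of change of the Cartan tensor $\mathbf{C}_y$ along the geodesic flow, since $\mathbf{C}_y$ itself is symmetric and trilinear (it is built from a third partial derivative of $F^2$, as recorded after equation (\ref{009})), and the horizontal (spray) derivative preserves these algebraic features. So the only substantive content is the characterizing formula, and by polarization it suffices to verify it when all three vector arguments coincide — hence the formulation with a single parallel field $W(t)$.

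Next I would set up the one-parameter family: take an arbitrary geodesic $c(t)$ on $(G,F)$, which by Theorem \ref{thm-1} corresponds to an integral curve $y(t)$ of $-\eta$ in $\mathfrak{g}$, and an arbitrary linearly parallel vector field $W(t)$ along $c(t)$, which by Theorem \ref{thm-3} corresponds to $w(t)=(L_{c(t)^{-1}})_*(W(t))$ satisfying the linear ODE $\dot w + N(y,w) + [y,w] = 0$. The quantity $\mathbf{C}_{\dot c(t)}(W(t),W(t),W(t))$ is, by left invariance of $F$ and hence of its Cartan tensor, equal to $\mathbf{C}_{y(t)}(w(t),w(t),w(t))$, a genuine scalar function of $t$ computable purely on $\mathfrak{g}$. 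I would then differentiate this scalar in $t$, splitting the derivative into the term coming from the motion of the base point $y(t)$ along $-\eta$ (which produces the horizontal/spray part) and the three terms from the motion of $w(t)$ (which, because $W(t)$ is parallel, are precisely the vertical corrections that the covariant derivative subtracts off). Matching this with the coordinate definition of the Landsberg tensor $\mathbf{L} = -\tfrac12 F \, \mathbf{C}_{|k} y^k$-type expression (equivalently $\mathbf{L}_{ijk} = \dot{\mathbf{C}}_{ijk}$ along unit-speed geodesics, as in (7.16) of \cite{Sh2001-2}) gives the claimed identity; this is really just the invariant restatement of the classical fact that the Landsberg curvature measures the failure of the Cartan tensor to be parallel along geodesics.

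The main obstacle — though it is bookkeeping rather than a conceptual difficulty — is the homogeneity normalization and the passage between the "unit-speed" formulation in which (7.16) of \cite{Sh2001-2} is usually stated and the "constant-speed $F(\dot c)\equiv\mathrm{const}>0$" geodesics furnished by Theorem \ref{thm-1}; one must check that the $2$-homogeneity of $\mathbf{G}^i_F$ in $y$ and the $0$-homogeneity of $\mathbf{L}_y$ in $y$ make the formula speed-independent, so that no spurious factor of $F(\dot c(t))$ appears. A second point requiring care is that $F$ here is only a conic Finsler metric, so all of this is valid only on the conic domain where $F$ is defined; but since every object in the formula — $\mathbf{C}$, $\mathbf{L}$, geodesics, parallel fields — is micro-locally defined in the sense discussed in the introduction, the classical computation goes through verbatim on the conic domain, and no global hypothesis is needed. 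Once these homogeneity checks are done, the lemma follows.
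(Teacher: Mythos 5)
The paper does not actually prove this lemma: it is quoted as a known identity, with the proof delegated to (7.16) of \cite{Sh2001-2}. So the question is whether your sketch would stand on its own as a derivation of that identity, and as written it does not, for one structural reason. The lemma is a statement about an \emph{arbitrary} (conic) Finsler manifold $(M,F)$, yet your argument immediately places itself on the Lie group $(G,F)$ and invokes Theorem \ref{thm-1} (geodesics as integral curves of $-\eta$) and Theorem \ref{thm-3} (parallel fields via the connection operator $N$). Those results are specific to left invariant metrics and are, in the paper's logical architecture, \emph{downstream} of this lemma: Lemma \ref{lemma-2} uses the general identity of Lemma \ref{lemma-1} together with the left invariant machinery to get the Landsberg ODE (\ref{017}). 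Proving the general lemma by specializing to the homogeneous setting therefore establishes only the special case that happens to be used later, not the statement as asserted, and it inverts the intended order of dependence. The correct route is the one you gesture at in passing but do not carry out: the local-coordinate identity $\mathbf{L}_{ijk}=\mathbf{C}_{ijk|m}y^m$ (Cartan tensor differentiated horizontally along the geodesic spray), from which the displayed formula follows because along a geodesic with parallel $W(t)$ the vertical correction terms in $\tfrac{{\rm d}}{{\rm d}t}\mathbf{C}_{\dot c(t)}(W,W,W)$ vanish by definition of parallelism. That computation lives entirely on $(M,F)$ and needs no group structure.

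Your remaining points are sound but should be kept: the symmetry and trilinearity of $\mathbf{L}_y$ do follow from those of $\mathbf{C}_y$ since the horizontal derivative is linear; polarization does recover the full tensor from its diagonal values, which is why the lemma can be stated with a single $W(t)$; and the homogeneity bookkeeping (the $0$-homogeneity of $\mathbf{L}_y$ in $y$ versus constant-speed rather than unit-speed geodesics) together with the observation that everything is micro-local, hence valid on a conic domain, are exactly the two normalization checks one must make to transplant (7.16) of \cite{Sh2001-2} into the form used here. If you strip out the appeal to Theorems \ref{thm-1} and \ref{thm-3} and replace it with the coordinate computation of $\mathbf{C}_{ijk|m}y^m$ on a general $(M,F)$, the proof is complete.
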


\subsection{Remark for the conic situation}
A {\it conic Finsler metric} can be similarly defined on a smooth manifold $M$, expect that it is
only defined on a {conic open subset} $\mathcal{A}$ of $TM\backslash\{0\}$ \cite{JV2018}. Here we call $\mathcal{A}$ {\it conic} if $\mathcal{A}_x=\mathcal{A}\cap T_xM$ for each $x\in M$ is a nonempty conic open subset of $T_xM\backslash\{0\}$. In each $T_xM$, $F(x,\cdot):\mathcal{A}_x\rightarrow \mathbb{R}_{>0}$ satisfies the regularity, positive 1-homogeneity and strong convexity requirements. For simplicity, we may call $F(x,\cdot)$ a {\it conic Minkowski norm}.  All notions and discussions in general and homogeneous Finsler geometries can be generalized to the conic situation, as long as the tangent vector (for example, the $y$-entry in the fundamental tensor $g_y$, the geodesic spray coefficient $G^i_F=G^i_F(x,y)$, and the Riemann curvature operator $\mathbf{R}_y$, the tangent vector field $\dot{c}(t)$ for a geodesic $c(t)$, etc) is contained in $\mathcal{A}$.

In particular, a left invariant conic Finsler metric $F$ on $G$ is one-to-one determined by the conic Minkowski norm $F=F(e,\cdot)$. If $F(e,\cdot)$ is defined on the conic open subset $\mathcal{A}_e\subset\mathfrak{g}\backslash\{0\}=T_eG\backslash\{0\}$, then the left invariant conic Finsler metric $F$ is defined on $\mathcal{A}=\bigcup_{g\in G}(L_g)_*(\mathcal{A}_e)$. Its spray vector field $\eta:\mathcal{A}_e\rightarrow\mathfrak{g}$
and connection operator $N=\tfrac12D\eta(y,v)-\tfrac12[y,v]:\mathcal{A}_e\times\mathfrak{g}\rightarrow\mathfrak{g}$
can be defined by the same equalities (\ref{008})  and (\ref{009}). After slight modifications, all results mentioned above are valid for a left invariant conic Finsler metric.

\section{Special left invariant conic Finsler metrics on a two dimensional non-Abelian Lie group}
\subsection{Some notations and calculations in polar coordinate}
\label{subsection-3-1}
Throughout this section, we always assume that
$G$ is a 2-dimensional  non-Abelian Lie group. We choose a basis $\{e_1,e_2\}$ for its Lie algebra $\mathfrak{g}$ and denote by $(y^1,y^2)$ and $(r,t)\in\mathbb{R}_{>0}\times\mathbb{R}/2\pi\mathbb{Z}$ the coordinates in $$y=y^1 e_1+y^2 e_2= r\cos t\ e_1+r\sin t\ e_2\in\mathfrak{g}\backslash\{0\}.$$
 Suppose that $F=r\sqrt{2f(t)}$ a conic Minkowski norm on $\mathfrak{g}$, which is defined for $t$ around $0$ (or equivalently, in a sufficiently small connected conic open neighborhood of $e_1$). We use the same $F$ to denote the left invariant conic Finsler metric satisfying $F(e,\cdot)=r\sqrt{2f(t)}$.

The criterion for the strong convexity of $F=r\sqrt{2f(t)}$ can be checked by calculating the fundamental tensor $g_y$ for the tangent frame $\{\partial_r,\partial_t-\tfrac{r}{2f(t)}\tfrac{{\rm d}}{{\rm d} t}f(t)\partial_r\}$ at each point. Similar calculation as for Lemma 3.5 in
\cite{XM2022} shows that
\begin{eqnarray}
& &g_y(\partial_r,\partial_r)=2f(t), \quad g_y(\partial_r,\partial_t-\tfrac{r}{2f(t)}\tfrac{{\rm d}}{{\rm d} t}f(t)\partial_r)=0, \quad\mbox{and}\nonumber\\
& &g_y(\partial_t-\tfrac{r}{2f(t)}\tfrac{{\rm d}}{{\rm d} t}f(t)\partial_r,\partial_t-
\tfrac{r}{2f(t)}\tfrac{{\rm d}}{{\rm d} t}f(t)\partial_r)\nonumber\\
& &\quad \quad =\tfrac{r^2}{2f(t)}\left(2f(t)\tfrac{{\rm d}^2}{{\rm d}t^2}f(t)-\left(\tfrac{{\rm d}}{{\rm d} t}f(t)\right)^2+4f(t)^2\right)
\label{010}
\end{eqnarray}
at each $y=r\cos t\ e_1+r\sin t\ e_2\in\mathfrak{g}\backslash\{0\}$.
So the strong convexity of $F$ is equivalent to the positiveness of $g_y(\partial_t-\tfrac{r}{2f(t)}\tfrac{{\rm d}}{{\rm d} t}f(t)\partial_r,\partial_t-\tfrac{r}{2f(t)}\tfrac{{\rm d}}{{\rm d} t}f(t)\partial_r)>0$ everywhere. To summarize, we have

\begin{lemma}\label{lemma-3}
$F=r\sqrt{2f(t)}$ is strongly convex if and only if
$$2f(t)\tfrac{{\rm d}^2}{{\rm d}t^2}f(t)-\left(\tfrac{{\rm d}}{{\rm d} t}f(t)\right)^2+4f(t)^2>0$$
is satisfied everywhere.
\end{lemma}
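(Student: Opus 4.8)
The plan is to read the statement off directly from the fundamental tensor computation recorded in (\ref{010}), after disposing of two small points: that $\{\partial_r,\ \partial_t-\tfrac{r}{2f(t)}\tfrac{{\rm d}}{{\rm d}t}f(t)\partial_r\}$ is genuinely a frame on the domain, and that the $(1,1)$-entry of $g_y$ in this frame is automatically positive. Since $\partial_r,\partial_t$ form a smooth frame on $\mathfrak{g}\backslash\{0\}$ and the transition matrix from $\{\partial_r,\partial_t\}$ to $\{\partial_r,\ \partial_t-\tfrac{r}{2f}f'\partial_r\}$ is unipotent, hence invertible, the latter is again a frame; positive-definiteness of $g_y$ is therefore equivalent to positive-definiteness of the matrix of $g_y$ in this frame. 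Moreover $f(t)>0$ wherever $F$ is defined, because $F^2(y)=2r^2f(t)>0$ — this is also what makes the frame well-defined in the first place.

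By (\ref{010}) that matrix is diagonal, with entries $g_y(\partial_r,\partial_r)=2f(t)$ and $g_y(\partial_t-\tfrac{r}{2f}f'\partial_r,\ \partial_t-\tfrac{r}{2f}f'\partial_r)=\tfrac{r^2}{2f(t)}\big(2f(t)f''(t)-(f'(t))^2+4f(t)^2\big)$. The first entry is positive by the previous paragraph, so the matrix is positive definite if and only if the second entry is positive; since $\tfrac{r^2}{2f(t)}>0$, this holds exactly when $2f(t)f''(t)-(f'(t))^2+4f(t)^2>0$. As $y$ ranges over the domain, $t$ ranges over a neighbourhood of $0$, which yields the asserted pointwise condition, completing the proof modulo (\ref{010}).

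The only genuine work is the derivation of (\ref{010}) itself, so I would include at least its skeleton. Using the homogeneity identities $g_y(y,y)=F^2(y)$ and $g_y(y,u)=\tfrac12\,u(F^2)$ (directional derivative at $y$) together with $y=r\partial_r$, one gets $g_y(\partial_r,\partial_r)=\tfrac1{r^2}F^2=2f(t)$ and $g_y(\partial_r,u)=\tfrac1{2r}u(F^2)$ immediately; substituting $u=\partial_t-\tfrac{r}{2f}f'\partial_r$ and $\partial_t(F^2)=2r^2f'$, $\partial_r(F^2)=4rf$ makes this mixed term vanish, which is precisely the motivation for that frame. The remaining entry requires one honest second-derivative computation of $F^2=2r^2f(t)$; expanding it in the linear coordinates $(y^1,y^2)$, or quoting the parallel computation for Lemma 3.5 of \cite{XM2022}, produces the displayed expression. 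This Hessian computation is the main (and only) obstacle, but it is entirely routine.

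As an independent sanity check I would note that strong convexity of $F=r\sqrt{2f(t)}$ is equivalent to strict convexity of the indicatrix $\{F=1\}$, i.e. of the polar curve $r=(2f(t))^{-1/2}$; the classical criterion $r^2+2(r')^2-rr''>0$ for such a curve, after substituting $r=(2f)^{-1/2}$ and clearing the positive factor $(2f)^3$, collapses to exactly $2ff''-(f')^2+4f^2>0$, which recovers Lemma \ref{lemma-3} and confirms the constant $4$ in the last term.
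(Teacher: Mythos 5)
Your proof is correct and follows essentially the same route as the paper, which likewise reads the criterion off the diagonalized fundamental tensor (\ref{010}) in the frame $\{\partial_r,\ \partial_t-\tfrac{r}{2f(t)}\tfrac{{\rm d}}{{\rm d}t}f(t)\partial_r\}$ and only differs in that you spell out the points the paper leaves implicit (that this is a frame, that $g_y(\partial_r,\partial_r)=2f(t)>0$, and the homogeneity identities behind the vanishing mixed term). Your closing sanity check via the polar-curve convexity criterion $\rho^2+2(\rho')^2-\rho\rho''>0$ with $\rho=(2f)^{-1/2}$ is a nice independent confirmation not present in the paper.
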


The indicatrix $F=1$ in $\mathfrak{g}$ can be parametrized as $y(t)=\tfrac{1}{\sqrt{2f(t)}}\cos t\ e_1+\tfrac{1}{\sqrt{2f(t)}}\sin t\ e_2$. Its tangent vector field $\tfrac{{\rm d}}{{\rm d}}y(t)$ coincides with the restriction of
$\partial_t-\tfrac{r}{2f(t)}\tfrac{{\rm d}}{{\rm d} t}f(t)\partial_r$ to $y(t)$. This observation provides
$$\tfrac{{\rm d}}{{\rm d} t}y(t)=\partial_t-\tfrac{1}{(2f(t))^{3/2}}\tfrac{{\rm d}}{{\rm d} t}f(t)\partial_r$$
along $y(t)$.


The Cartan tensor can be similarly calculated as for (2.12) in \cite{XM2022}. Notice that it is multiple linear and symmetric, and it vanishes if any entry is $\partial_r$. So it is completely determined by
\begin{equation}\label{011}
\mathbf{C}_{y(t)}(\tfrac{{\rm d}}{{\rm d} t}y(t),\tfrac{{\rm d}}{{\rm d} t}y(t),\tfrac{{\rm d}}{{\rm d} t}y(t))=\mathbf{C}_{y(t)}(\partial_t,\partial_t,\partial_t)=
\tfrac{1}{f(t)}\tfrac{{\rm d}}{{\rm d} t}f(t)+\tfrac1{4f(t)}\tfrac{{\rm d}^3}{{\rm d}t^3}f(t).
\end{equation}

Finally, we consider the the spray vector field $\eta$ of $F$.

\begin{lemma} \label{lemma-9}
Assuming $[{e}_1,{e}_2]=\epsilon_1 {e}_1+\epsilon_2 {e}_2$,
then the spray vector field $\eta$ of
$F={r}\sqrt{2{f}({t})}$ satisfies
$$\eta =-\tfrac{(4\epsilon_1{y}^1+4\epsilon_2{y}^2)+
2(-\epsilon_1{y}^2+\epsilon_2{y}^1)\tfrac{1}{{f}({t})}
\tfrac{{\rm d}}{{\rm d}{t}}{f}({t})}{
\tfrac{2}{{f}({t})}\tfrac{{\rm d}^2}{{\rm d}{t}^2}{f}({t})-\left(
\tfrac{1}{{f}({t})}
\tfrac{{\rm d}}{{\rm d}{t}}{f}({t})\right)^2
+4}\cdot\left((-{y}^2-\tfrac{{y}^1}{2{f}({t})}
\tfrac{{\rm d}}{{\rm d}{t}}{f}({t})){e}_1
+({y}^1-\tfrac{{y}^2}{2{f}({t})}
\tfrac{{\rm d}}{{\rm d}{t}}{f}({t})){e}_2\right)$$
at each possible $y=y^1e_1+y^2e_2\in\mathfrak{g}\backslash\{0\}$. In particular, when restricted to the indicatrix $y(t)$ of $F$, we have
$$\eta(y(t))=-\tfrac{\sqrt{2f(t)}\left(f(t)(2\epsilon_1\cos t+2\epsilon_2\sin t)+
\tfrac{{\rm d}}{{\rm d}t}f(t)
(-\epsilon_1\sin t+\epsilon_2\cos t)
\right)}{
{2}{{f}({t})}\tfrac{{\rm d}^2}{{\rm d}{t}^2}{f}({t})-\left(
\tfrac{{\rm d}}{{\rm d}{t}}{f}({t})\right)^2
+4f(t)^2}\cdot\tfrac{{\rm d}}{{\rm d}t}y(t).$$
\end{lemma}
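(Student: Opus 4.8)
\textbf{Proof proposal for Lemma \ref{lemma-9}.}

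The plan is to compute $\eta$ directly from its defining equation (\ref{008}), namely $g_y(\eta(y),u)=g_y(y,[u,y])$ for all $u\in\mathfrak{g}$, using the orthogonal-type frame already set up in (\ref{010}). First I would note that, by positive $1$-homogeneity of $\eta$ and $0$-homogeneity of its direction, it suffices to determine $\eta$ on the indicatrix $y(t)$ and then rescale; I will in fact do the computation at a general $y=y^1e_1+y^2e_2$ since the formulas are homogeneous. The key structural observation is that the frame $\{\partial_r,\ X_t:=\partial_t-\tfrac{r}{2f(t)}\tfrac{{\rm d}}{{\rm d}t}f(t)\partial_r\}$ is $g_y$-orthogonal, with $g_y(\partial_r,\partial_r)=2f(t)$ and $g_y(X_t,X_t)=\tfrac{r^2}{2f(t)}\bigl(2f\tfrac{{\rm d}^2}{{\rm d}t^2}f-(\tfrac{{\rm d}}{{\rm d}t}f)^2+4f^2\bigr)$ from (\ref{010}). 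Hence, writing $\eta(y)=A\,\partial_r+B\,X_t$, I recover $A$ and $B$ by testing (\ref{008}) against $u=\partial_r$ and $u=X_t$ respectively: $A=\tfrac{1}{2f(t)}g_y(y,[\partial_r,y])$ and $B=\bigl(g_y(X_t,X_t)\bigr)^{-1}g_y(y,[X_t,y])$.

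Next I would carry out the bracket computations. Since $y=r\cos t\,e_1+r\sin t\,e_2$ and $[e_1,e_2]=\epsilon_1 e_1+\epsilon_2 e_2$, one has $[\partial_r,y]=\cos t\,e_1+\sin t\,e_2=\tfrac1r y$ (as $\partial_r$ corresponds to the radial direction $\cos t\,e_1+\sin t\,e_2$), and a short bilinearity calculation gives $[X_t,y]$ in terms of $\epsilon_1,\epsilon_2$; the $\partial_r$-part of $X_t$ contributes a multiple of $y$ again, so only the $\partial_t$-part produces the genuinely new term $[\partial_t\cdot(\text{direction}),y]$. Because $\partial_t$ of the radial direction is $-\sin t\,e_1+\cos t\,e_2$, I get $[ \text{(angular direction)}, y]=r(\epsilon_2\cos t-\epsilon_1\sin t)(\cos t\,e_1+\sin t\,e_2)-\ldots$; I will organize this so that the radial components drop out of $g_y(y,\cdot)$ only where they should. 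Feeding these into the two scalar equations for $A$ and $B$, using $g_y(y,\partial_r)=r\cdot 2f(t)$ and $g_y(y,X_t)=0$ to simplify, yields the asserted closed form. Finally, restricting to $y(t)$ (i.e.\ $r=\tfrac{1}{\sqrt{2f(t)}}$) and using $\tfrac{{\rm d}}{{\rm d}t}y(t)=X_t|_{y(t)}$ gives the "in particular" formula after collecting the $\sqrt{2f(t)}$ factors.

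I do not expect a serious conceptual obstacle here, since everything reduces to linear algebra in a $2$-dimensional space plus the already-tabulated metric data; the real work is bookkeeping. The main place to be careful is the bracket $[X_t,y]$: one must correctly separate the contribution of the $\partial_r$-correction term inside $X_t$ (which merely reproduces a radial multiple of $y$ and hence is killed or rescaled under $g_y(y,\cdot)$ in the appropriate slot) from the genuine $\partial_t$ contribution, and then track signs through $[\,-\sin t\,e_1+\cos t\,e_2,\ \cos t\,e_1+\sin t\,e_2\,]=(\epsilon_1 e_1+\epsilon_2 e_2)(\cos^2 t+\sin^2 t)\cdot(\pm1)$-type identities. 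A secondary check is dimensional consistency in $r$: the displayed $\eta$ must be positively $1$-homogeneous in $y$, which it visibly is since $t$ is $0$-homogeneous and the prefactor is linear in $(y^1,y^2)$; verifying this homogeneity at the end is a cheap sanity check that the radial bookkeeping was done correctly. An alternative route, which I would mention but not pursue, is to use the relation $\eta(y)=-\mathbf{L}_\eta$-free part via Matveev--Xu's Lemma~3.5 analogue in \cite{XM2022} directly, but the direct computation above is cleaner in this low-dimensional setting.
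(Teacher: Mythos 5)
Your overall strategy is the same as the paper's: work in the $g_y$-orthogonal frame $\{\partial_r, X_t\}$ from (\ref{010}), feed test vectors into the defining identity (\ref{008}), and solve the resulting two scalar equations. With correct bookkeeping this does reproduce the lemma (I checked that $B=\bigl(g_y(X_t,X_t)\bigr)^{-1}g_y(y,[X_t,y])$ gives exactly the stated coefficient, since $[X_t,y]=-r^2(\epsilon_1e_1+\epsilon_2e_2)$ and $g_y(y,\epsilon_1e_1+\epsilon_2e_2)=2f(\epsilon_1y^1+\epsilon_2y^2)+\tfrac{{\rm d}}{{\rm d}t}f\cdot(-\epsilon_1y^2+\epsilon_2y^1)$). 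The paper differs only in that it obtains the vanishing of the radial component for free from $g_y(\eta(y),y)=g_y(y,[y,y])=0$ and then fixes the remaining scalar by comparing $g_y(\eta(y),e_1)$ with $g_y(X_t,e_1)$, rather than testing against $\partial_r$ and $X_t$.

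However, your computation of the radial coefficient contains a genuine error. You assert $[\partial_r,y]=\cos t\,e_1+\sin t\,e_2=\tfrac1r y$, but since $\partial_r$ \emph{is} the vector $\tfrac1r y$, the Lie bracket is $[\partial_r,y]=\tfrac1r[y,y]=0$, not $\tfrac1r y$. If you run your own formula $A=\tfrac{1}{2f(t)}g_y(y,[\partial_r,y])$ with your claimed bracket, you get $A=\tfrac{1}{2f(t)}\cdot\tfrac1r F^2(y)=r\neq0$, i.e.\ a spurious radial component of $\eta$, which contradicts the displayed formula (and the identity $g_y(\eta(y),y)=0$). The correct value is $A=0$. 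A related muddle appears in your treatment of $[X_t,y]$: the $\partial_r$-correction inside $X_t$ contributes $0$ to the bracket (not ``a multiple of $y$''), and in the subsequent pairing $g_y(y,\cdot)$ it is the \emph{angular} component of $[X_t,y]$ that is killed (by $g_y(y,X_t)=0$) while the radial component survives --- the opposite of what your phrase ``radial components drop out'' suggests. Both points are fixable without changing your plan, but as written the radial part of the computation would come out wrong.
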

%
%

\begin{proof}At any $y=y^1e_1+y^2e_2=r\cos t \ e_1+r\sin t \ e_2\in\mathfrak{g}\backslash\{0\}$,
we identify $e_1$ and $e_2$ with $\partial_{y^1}$ and $\partial_{y^2}$ respectively. So we have
\begin{eqnarray*}
& &e_1=\cos t\partial_r-\tfrac1r\sin t\partial_t=-\tfrac1r\sin t(\partial_t-\tfrac{r}{2f(t)}\tfrac{{\rm d}}{{\rm d}t}f(t)\partial_r)
+(\cos t-\tfrac{1}{2f(t)})\tfrac{{\rm d}}{{\rm d}t}f(t)\sin t)\partial_r,\\
& &e_2=\sin t\partial_r+\tfrac1r\cos t\partial_t=
\tfrac1r\cos t(\partial_t-\tfrac{r}{2f(t)}\tfrac{{\rm d}}{{\rm d}t}f(t)\partial_r)
+(\sin t+\tfrac{1}{2f(t)}\tfrac{{\rm d}}{{\rm d}t}f(t)\cos t)\partial_r.
\end{eqnarray*}

By (\ref{008}) and
(\ref{010}), we have
\begin{eqnarray}
g_y(\eta(y),e_1)&=&g_y(y,[e_1,y])=g_y(r\partial_r,\epsilon_1y^2 e_1+\epsilon_2 y^2e_2)\nonumber\\
&=&\left(\epsilon_1ry^2(\cos t-\tfrac{1}{2f(t)}\tfrac{{\rm d}}{{\rm d}t}f(t)\sin t)+
\epsilon_2 ry^2(\sin t+\tfrac{1}{2f(t)}\tfrac{{\rm d}}{{\rm d}t}f(t)\cos t)\right)\cdot g_y(\partial_r,\partial_r)\nonumber\\
&=&\left(2\epsilon_1 y^1 y^2 f(t)+2\epsilon_2 (y^2)^2\right)+\left(-\epsilon_1(y^2)^2+\epsilon_2
y^1y^2\right)\tfrac{{\rm d}}{{\rm d}t}f(t).\label{013}
\end{eqnarray}
On the other hand, for
$$\partial_t-\tfrac{r}{2f(t)}\tfrac{{\rm d}}{{\rm d}t}f(t)\partial_r
=(-y^2-\tfrac{y^1}{2f(t)}\tfrac{{\rm d}}{{\rm d}t}f(t))e_1+(y^1-\tfrac{y^2}{2f(t)}\tfrac{d}{dt}f(t))e_2,$$
we have
\begin{eqnarray}
g_y(\partial_t-\tfrac{r}{2f(t)}\tfrac{{\rm d}}{{\rm d} t}f(t)\partial_r, e_1)
&=& g_y(\partial_t-\tfrac{r}{2f(t)}\tfrac{{\rm d}}{{\rm d} t}f(t)\partial_r, -\tfrac{\sin t}{r}(\partial_t-\tfrac{r}{2f(t)}\tfrac{{\rm d}}{{\rm d} t}f(t)\partial_r))\nonumber \\
&=&-\tfrac{y^2}{2f(t)}\left(2f(t)\tfrac{{\rm d}^2}{{\rm d}t^2}f(t)-\left(\tfrac{{\rm d}}{{\rm d} t}f(t)\right)^2+4f(t)^2\right).
\label{014}
\end{eqnarray}
Since $g_y(y,\eta(y))=0$, $\eta(y)$ is a scalar multiple of
$\partial_t-\tfrac{r}{2f(t)}\tfrac{{\rm d}}{{\rm d} t}f(t)\partial_r$. Compare (\ref{013}) and (\ref{014}), we
see the ratio between $\eta$ and $\partial_t-\tfrac{r}{2f(t)}\tfrac{{\rm d}}{{\rm d}t}f(t)\partial_r$ is
$$-\tfrac{(4\epsilon_1{y}^1+4\epsilon_2{y}^2)+
2(-\epsilon_1{y}^2+\epsilon_2{y}^1)\tfrac{1}{{f}({t})}
\tfrac{{\rm d}}{{\rm d}{t}}{f}({t})}{
\tfrac{2}{{f}({t})}\tfrac{{\rm d}^2}{{\rm d}{t}^2}{f}({t})-\left(
\tfrac{1}{{f}({t})}
\tfrac{{\rm d}}{{\rm d}{t}}{f}({t})\right)^2
+4},$$
which proves the first statement of Lemma \ref{lemma-9}.

When restricted to $y(t)$, where $y^1=\tfrac{1}{\sqrt{2f(t)}}\cos t$ and $y^2=\tfrac{1}{\sqrt{2f(t)}}\sin t$, the second statement of Lemma \ref{lemma-9} follows immediately.
\end{proof}

For the convenience in later discussion, we will always assume $[e_1,e_2]=e_2$ and $\eta(y(0))\neq0$. Because $F$ is defined only for $t$ sufficiently close to $0$, the assumption $\eta(y(0))\neq0$ is equivalent to the non-vanishing of $\eta$ everywhere. The second statement
in Lemma \ref{lemma-9} can be simplified as
\begin{equation}\label{026}
\eta(y(t))=\tfrac{\sqrt{2f(t)}\left(-2 f(t)\sin t-
\tfrac{{\rm d}}{{\rm d}t}f(t)\cos t
\right)}{
 {2}{{f}({t})}\tfrac{{\rm d}^2}{{\rm d}{t}^2}{f}({t})-\left(
\tfrac{{\rm d}}{{\rm d}{t}}{f}({t})\right)^2
+4f(t)^2}\cdot\tfrac{{\rm d}}{{\rm d}t}y(t),
\end{equation}
from which we see $\tfrac{{\rm d}}{{\rm d}t}f(0)\neq0$.

We define a new smooth parameter
\begin{equation}\label{015}
s=\int\left(\tfrac{\sqrt{2f(t)}\left(2 f(t)\sin t+
\tfrac{{\rm d}}{{\rm d}t}f(t)\cos t
\right)}{
 {2}{{f}({t})}\tfrac{{\rm d}^2}{{\rm d}{t}^2}{f}({t})-\left(
\tfrac{{\rm d}}{{\rm d}{t}}{f}({t})\right)^2
+4f(t)^2}\right)^{-1}{\rm d}t
\end{equation}
 for the indicatrix $F=1$, and denote $t=t(s)$ the inverse function of $s=s(t)$.

\begin{lemma}\label{lemma-5}
The reparametrization $y(t(s))$ makes the indicatrix of $F$  an integral curve of $-\eta$.
\end{lemma}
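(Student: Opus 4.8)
The plan is to extract the statement directly from the explicit formula (\ref{026}) for $\eta$ along the indicatrix and from the defining integral (\ref{015}) of the new parameter $s$. Abbreviate the scalar coefficient appearing in (\ref{026}) by
$$\mu(t)=\frac{\sqrt{2f(t)}\bigl(2f(t)\sin t+\tfrac{{\rm d}}{{\rm d}t}f(t)\cos t\bigr)}{2f(t)\tfrac{{\rm d}^2}{{\rm d}t^2}f(t)-\bigl(\tfrac{{\rm d}}{{\rm d}t}f(t)\bigr)^2+4f(t)^2},$$
so that (\ref{026}) reads $\eta(y(t))=-\mu(t)\tfrac{{\rm d}}{{\rm d}t}y(t)$, while (\ref{015}) is exactly $s=\int\mu(t)^{-1}\,{\rm d}t$, i.e. $\tfrac{{\rm d}s}{{\rm d}t}=\mu(t)^{-1}$, equivalently $\tfrac{{\rm d}t}{{\rm d}s}=\mu(t(s))$.

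First I would check that this reparametrization is legitimate. The denominator of $\mu$ is positive by the strong convexity criterion of Lemma \ref{lemma-3}, so $\mu$ is smooth in $t$; and at $t=0$ its numerator equals $\sqrt{2f(0)}\,\tfrac{{\rm d}}{{\rm d}t}f(0)\neq0$, since $f(0)>0$ and $\tfrac{{\rm d}}{{\rm d}t}f(0)\neq0$ (the latter being precisely the standing assumption $\eta(y(0))\neq0$, as observed just after (\ref{026})). Shrinking the interval of $t$ around $0$ if necessary, $\mu$ keeps a constant sign, hence $s=s(t)$ is a strictly monotone smooth function with a smooth inverse $t=t(s)$, and the curve $\gamma(s):=y(t(s))$ is well defined.

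The conclusion is then a one-line chain rule computation: using $\tfrac{{\rm d}t}{{\rm d}s}=\mu(t(s))$ together with (\ref{026}),
$$\tfrac{{\rm d}}{{\rm d}s}\gamma(s)=\tfrac{{\rm d}}{{\rm d}t}y(t)\Big|_{t=t(s)}\cdot\tfrac{{\rm d}t}{{\rm d}s}=\mu(t(s))\,\tfrac{{\rm d}}{{\rm d}t}y(t)\Big|_{t=t(s)}=-\eta\bigl(y(t(s))\bigr)=-\eta(\gamma(s)),$$
so $\gamma(s)=y(t(s))$ is an integral curve of $-\eta$, as claimed. I do not anticipate any genuine obstacle here: the content of the lemma has been front-loaded into the computation leading to (\ref{026}) and into the deliberate choice of the parameter $s$ in (\ref{015}). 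The only point meriting a word of care is the non-vanishing of $\mu$ near $t=0$, which is what makes the change of variable $t\mapsto s$ a valid reparametrization.
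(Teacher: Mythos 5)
Your proof is correct and is essentially the same as the paper's: both apply the chain rule to $y(t(s))$ using $\tfrac{{\rm d}t}{{\rm d}s}=\mu(t(s))$ from (\ref{015}) together with the formula (\ref{026}) for $\eta$ along the indicatrix. Your added remark verifying that $\mu$ is nonvanishing near $t=0$ (so the reparametrization is legitimate) is a sensible extra detail that the paper leaves implicit in the observation $\tfrac{{\rm d}}{{\rm d}t}f(0)\neq0$ following (\ref{026}).
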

\begin{proof}
By the Chain Rule and (\ref{026}), we have
$$\tfrac{{\rm d}}{{\rm d} s}y(t(s))=\tfrac{{\rm d}}{{\rm d} s}t(s)\cdot\tfrac{{\rm d}}{{\rm d} t}|_{t=t(s)}y(t)=-\eta(y(t(s))),$$
which proves Lemma \ref{lemma-5}.
\end{proof}
\subsection{Conic Finsler metrics with constant flag curvature}
In this subsection, we discuss when the left invariant conic Finsler metric $F=r\sqrt{2f(t)}$ in Section
 \ref{subsection-3-1} has constant flag curvature.

Denote
\begin{equation}\label{016}
u(t)=\tfrac{\tfrac{{\rm d}}{{\rm d} t}y(t)}{\left(g_{y(t)}(\tfrac{{\rm d}}{{\rm d} t}y(t),\tfrac{{\rm d}}{{\rm d} t}y(t))\right)^{1/2}}
=\tfrac{\partial_t-\tfrac{1}{(2f(t))^{3/2}}\tfrac{{\rm d}}{{\rm d} t}f(t)\partial_r}{\tfrac{1}{2f(t)}\left(2f(t)
\tfrac{{\rm d}^2}{{\rm d}t^2}f(t)-\left(\tfrac{{\rm d}}{{\rm d} t}f(t)\right)^2+4f(t)^2\right)^{1/2}
},
\end{equation}
which is a smooth tangent vector field along $y(t)$ with $g_{y(t)}(u(t),u(t))\equiv 1$.

\begin{lemma} \label{lemma-6}
$F=r\sqrt{2f(t)}$ has constant flag curvature $K\equiv c$ if and only if
\begin{equation}\label{018}
-\left(\tfrac{{\rm d}^2}{{\rm d}s^2}\lambda(s)\right)\cdot \lambda(s)^{-1}\equiv c,\forall s.
\end{equation}
Here the $s$-parameter is given in (\ref{015}) and $\lambda(s)$ is determined by
$u(t(s))=\lambda(s)\cdot\eta(y(t(s)))$.
\end{lemma}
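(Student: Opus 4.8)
The plan is to express the flag curvature of $F$ in the $2$-dimensional setting as a single scalar function along the indicatrix and then to recognize that, in the arclength-type parameter $s$ of (\ref{015}), this scalar satisfies a Jacobi-type second order ODE whose coefficient is precisely the ratio appearing in (\ref{018}). The starting point is Theorem \ref{thm-2}: since $G$ is $2$-dimensional, once we fix a geodesic $c(t)$ with $y(t)=(L_{c(t)^{-1}})_*(\dot c(t))$ the normal direction to $y(t)$ in $g_{y(t)}$ is spanned by $u(t)$ of (\ref{016}), and a linearly parallel field $W(t)$ along $c(t)$ is, up to the fibrewise $g$-orthogonal decomposition, a multiple of the pullback of $u$ in that frame. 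By Lemma \ref{lemma-5}, after reparametrizing by $s$ the indicatrix $y(t(s))$ is itself an integral curve of $-\eta$, hence (Theorem \ref{thm-1}) corresponds to a unit-speed geodesic through $e$; this is the geodesic we run the curvature computation along.

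Next I would use the two-step formula $N(t)=-\mathrm{L}_\eta w(t)$ and $R(t)=\mathrm{L}_\eta N(t)$ of Theorem \ref{thm-2}. Write the parallel field in the left-translated picture as $w(s)=\mu(s)\,u(t(s))$ for a scalar $\mu$; since everything lives along a curve in the $2$-plane $\mathfrak{g}$ and the curvature operator $\mathbf{R}_{y}$ is (fibrewise) a symmetric endomorphism annihilating $y$, the flag curvature is the single eigenvalue $K=R(s)\cdot w(s)^{-1}$ measured in the $g$-metric, i.e. $g_{y(s)}(R(s),u)=K\,g_{y(s)}(w(s),u)$ for $u\perp y$. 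The key algebraic move is to trade the field $u(t(s))$ for $\eta(y(t(s)))$ via the defining relation $u(t(s))=\lambda(s)\,\eta(y(t(s)))$: because $s$ is exactly the parameter for which $-\eta$ is the velocity of $y(t(s))$, Lie-differentiating along $-\eta$ becomes ordinary $s$-differentiation of the scalar coefficients, modulo the motion of the indicatrix itself. Carrying $w(s)=\mu(s)u(t(s))=\mu(s)\lambda(s)\eta(y(t(s)))$ through $N=-\mathrm{L}_\eta w$ and then $R=\mathrm{L}_\eta N$ should, after cancelling the common factor coming from $\eta$ being the velocity, collapse to $R(s)=-\big(\tfrac{d^2}{ds^2}\lambda(s)\big)\lambda(s)^{-1}\,w(s)$, which is exactly (\ref{018}) once we divide by $w(s)$.

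The main obstacle, and the step that needs genuine care rather than bookkeeping, is controlling the Lie derivatives $\mathrm{L}_\eta$ of the vector field $u(t(s))$ (equivalently of $\eta$ itself) along the indicatrix: $\mathrm{L}_\eta$ is the Lie bracket with a genuinely nonlinear vector field on $\mathfrak{g}\setminus\{0\}$, so one must show that the "unwanted" bracket terms — those not already captured by $s$-derivatives of $\lambda$ and $\mu$ — are tangential to $y(s)$ and hence invisible after pairing with $u\perp y$, or else cancel between the $N$ step and the $R$ step. Concretely I expect to need that $[\eta, y]$ along the integral curve is itself proportional to $\eta$ (which follows from $y(t(s))$ being an integral curve of $-\eta$, so $\eta$ is "$s$-translation" and commutes with $\tfrac{d}{ds}$), plus the normalization $g_{y(t)}(u(t),u(t))\equiv1$ to pin down how $g_{y(s)}(\eta,\eta)$ varies — this is where $\lambda(s)$ enters quantitatively. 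Once these tangential/normalization reductions are in place, the remaining computation is the routine one producing the Jacobi equation, and choosing $\mu\equiv1$ (so $w=\lambda\,\eta$ is a legitimate parallel field precisely up to reparametrization) finishes the identification $K\equiv c \iff -\lambda''\lambda^{-1}\equiv c$.
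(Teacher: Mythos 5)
Your proposal follows essentially the same route as the paper: reparametrize the indicatrix by $s$ so that it becomes an integral curve of $-\eta$ (Lemma \ref{lemma-5}), apply Theorem \ref{thm-2} to the $g$-unit normal $u(t(s))=\lambda(s)\eta(y(t(s)))$, observe that $\mathrm{L}_\eta$ acts as $-\tfrac{{\rm d}}{{\rm d}s}$ on the scalar coefficient of $\eta$, and read off $K=-\lambda''\lambda^{-1}$ from $g_{y}(u,u)\equiv1$. The "unwanted bracket terms" you worry about are in fact absent from the start, since $\mathrm{L}_\eta(\lambda\eta)=(\eta\lambda)\eta+\lambda[\eta,\eta]=(\eta\lambda)\eta$, so the computation collapses exactly as in the paper without any further tangential reduction.
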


\begin{proof} By the left invariancy,  $F=r\sqrt{2f(t)}$ has constant flag curvature $K\equiv c$ if and only if
$K(e, y(t(s)),y(t(s))\wedge u(t(s)))= g_{y(t(s))}(\mathbf{R}_{y(t(s))}u(t(s)), u(t(s)))\equiv c$.
By Theorem \ref{thm-2}, $\mathbf{R}_{y(t(s))} u(t(s))=-\mathrm{L}_\eta\mathrm{L}_{\eta} u(t(s))$.
Since $u(t(s))=\lambda(s)\cdot\eta(y(t(s)))$, we can use Lemma \ref{lemma-5} to get
\begin{eqnarray*}
\mathbf{R}_{y}= -\mathrm{L}_\eta\mathrm{L}_{\eta} u(t(s))
=\left(\tfrac{{\rm d}^2}{{\rm d}s^2}\lambda(s) \right)\cdot \eta(y(t(s))
=\left(\tfrac{{\rm d}^2}{{\rm d}s^2}\lambda(s) \right)\cdot \lambda(s)^{-1}\cdot u(t(s)),
\end{eqnarray*}
and then
$$K(e,y(t(s)),y(t(s))\wedge u(t(s)))=\left(\tfrac{{\rm d}^2}{{\rm d}s^2}\lambda(s) \right)\cdot \lambda(s)^{-1}.$$
This ends the proof of Lemma \ref{lemma-6}.
\end{proof}
\medskip

\begin{proof}[Proof of Theorem A]
For $y_0\in\mathfrak{g}\backslash[\mathfrak{g},\mathfrak{g}]$, $\mathrm{ad}(y_0)=[y_0,\cdot]:\mathfrak{g}\rightarrow\mathfrak{g}$ has a nonzero eigenvalue. We first discuss the case that $\mathrm{ad}(y_0)$ has a positive eigenvalue. Then we can choose a basis $\{e_1,e_2\}$ of $\mathfrak{g}$ such that $[e_1,e_2]= e_2$
and $y_0\in\mathbb{R}_{>0}e_1$.

Using Lemma (\ref{026}) and (\ref{016}), we can write (\ref{018}) as
\begin{eqnarray}
& &-\tfrac{{\rm d}^2}{{\rm d}s^2}
\left(
\tfrac{\left(2f(t(s))^2\tfrac{{\rm d}^2}{{\rm d}t^2}f(t(s))-
f(t(s))\left(\tfrac{{\rm d}}{{\rm d} t}f(t(s))\right)^2
+4f(t(s))^3\right)^{1/2}}{
(2f(t(s)))^{3/2}\left(-2f(t(s))\sin t(s)-\tfrac{{\rm d}}{{\rm d} t}f(t(s))\cos t(s)\right)}\right)
\cdot \nonumber\\
& &\left(
\tfrac{\left(2f(t(s))^2\tfrac{{\rm d}^2}{{\rm d}t^2}f(t(s))-
f(t(s))\left(\tfrac{{\rm d}}{{\rm d} t}f(t(s))\right)^2
+4f(t(s))^3\right)^{1/2}}{
(2f(t(s)))^{3/2}\left(-2f(t(s))\sin t(s)-\tfrac{{\rm d}}{{\rm d} t}f(t(s))\cos t(s)\right)}\right)^{-1}\equiv c,\quad\forall s.\label{019}
\end{eqnarray}
It can be further translated to the following fourth order ODE,
\begin{eqnarray}& &
-\tfrac{{\rm d}}{{\rm d} t}\left(\tfrac{{\rm d}}{{\rm d} t}
\left(
\tfrac{\left(2f(t)^2\tfrac{{\rm d}^2}{{\rm d}t^2}f(t)-
f(t)\left(\tfrac{{\rm d}}{{\rm d} t}f(t)\right)^2
+4f(t)^3\right)^{1/2}}{
(2f(t))^{3/2}\left(-2f(t)\sin t-\tfrac{{\rm d}}{{\rm d} t}f(t)\cos t\right)}\right)
\cdot
\left(\tfrac{\sqrt{2f(t)}\left(2f(t)\sin t+\tfrac{{\rm d}}{{\rm d} t}f(t)\cos t\right)}{2f(t)\tfrac{{\rm d}^2}{{\rm d}t^2}f(t)-\left(\tfrac{{\rm d}}{{\rm d} t}f'(t)\right)^2+4f(t)^2}\right)\right)\nonumber\\
& &\cdot
\left(\tfrac{\sqrt{2f(t)}\left(2f(t)\sin t+\tfrac{{\rm d}}{{\rm d} t}f(t)\cos t\right)}{2f(t)\tfrac{{\rm d}^2}{{\rm d}t^2}f(t)-\left(\tfrac{{\rm d}}{{\rm d} t}f'(t)\right)^2+4f(t)^2}\right)
\cdot
\left(
\tfrac{\left(2f(t)^2\tfrac{{\rm d}^2}{{\rm d}t^2}f(t)-
f(t)\left(\tfrac{{\rm d}}{{\rm d} t}f(t)\right)^2
+4f(t)^3\right)^{1/2}}{
(2f(t))^{3/2}\left(-2f(t)\sin t-\tfrac{{\rm d}}{{\rm d} t}f(t)\cos t\right)}\right)^{-1}
\nonumber\\
& &=c.\label{020}
\end{eqnarray}
This equation is seemingly complicated, but theoretically simple. The conditions $f(0)>0$ and $\tfrac{{\rm d}}{{\rm d}t}f(0)\neq0$ guarantees that (\ref{020}) is regular at $t=0$. When $(f(0),\tfrac{{\rm d}}{{\rm d} t}f(0), \tfrac{{\rm d}^2}{{\rm d}t^2}f(0),\tfrac{{\rm d}^3}{{\rm d}t^3}f(0))$ is chosen from $\mathcal{M}$, i.e.,
\begin{equation}\label{021}
f(0)>0, \ \tfrac{{\rm d}}{{\rm d} t}f(0)\neq0, \ 2f(0)\tfrac{{\rm d}^2}{{\rm d}t^2}f(0)-\left(\tfrac{{\rm d}}{{\rm d} t}f(0)\right)^2+4f(0)^2>0,
\end{equation}
a unique $f(t)$ can be solved from (\ref{020}),
which provides a left invariant conic $F=r\sqrt{2f(t)}$
with constant flag curvature $K\equiv c$. The regularity and positive 1-homogeneity of this conic $F$,
which is only defined for $t$ sufficiently close to $0$, is guaranteed by the smoothness of $f(t)$, the first requirement in (\ref{021}), and the polar coordinate representation $F=r\sqrt{2f(t)}$. The third requirement in (\ref{021}) guarantees that $F$ is strongly convex (see Lemma \ref{lemma-3}).
The second guarantees that the spray vector field $\eta$ is nowhere vanishing.

Above argument proves Theorem A when $\mathrm{ad}(y_0)$ has a positive eigenvalue.
Nextly, we consider the case that $\mathrm{ad}(y_0)$ has a negative eigenvalue, the proof is similar. The ODE characterizing $K\equiv c$ and the requirement for the initial value remain unchanged.
\end{proof}

\subsection{Conic Landsberg metrics}
\label{subsection-3-3}
In this subsection,  we discuss when the left invariant conic Finsler metric $F=r\sqrt{2f(t)}$ in Section
 \ref{subsection-3-1} is Landsberg, i.e., its Landsberg curvature vanishes everywhere.

\begin{lemma}\label{lemma-2} $F=r\sqrt{2f(t)}$ is Landsberg if and only if
\begin{equation}\label{012}
\tfrac{{\rm d}}{{\rm d} t}\mathbf{C}_{y(t)}(u(t),u(t),u(t))\equiv0.
\end{equation}
Here $u(t)$ is the smooth tangent vector field along $y(t)$ satisfying $g_{y(t)}(u(t),u(t))\equiv1$, which is given in (\ref{016}).
\end{lemma}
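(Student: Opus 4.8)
\textbf{Proof proposal for Lemma \ref{lemma-2}.}

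The plan is to trace through the definition of the Landsberg curvature in Lemma \ref{lemma-1} and exploit the 2-dimensional structure set up in Section \ref{subsection-3-1}. By left invariancy it suffices to check the vanishing of $\mathbf{L}_y(u,v,w)$ at the base point $e$, with $y\in\mathcal{A}_e$. Since $\mathbf{L}_y$ is symmetric and trilinear in $u,v,w$, and since every relevant tensor ($g_y$, $\mathbf{C}_y$, $\mathbf{L}_y$) vanishes as soon as one entry is the radial direction $\partial_r$ (homogeneity kills the radial Cartan directions, exactly as recorded around (\ref{011})), the full Landsberg tensor at $y(t)$ is determined by its single component in the direction $u(t)$ tangent to the indicatrix. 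Thus $F$ is Landsberg if and only if $\mathbf{L}_{y(t)}(u(t),u(t),u(t))\equiv 0$ for all $t$ in the domain.

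Next I would invoke Lemma \ref{lemma-1} with a carefully chosen geodesic and parallel field. By Lemma \ref{lemma-5}, the reparametrized indicatrix $y(t(s))$ is an integral curve of $-\eta$, so by Theorem \ref{thm-1} it corresponds to a unit-speed geodesic $c(s)$ on $(G,F)$ with $y(s)=(L_{c(s)^{-1}})_*(\dot c(s))=y(t(s))$. Along this geodesic I need a linearly parallel vector field $W(s)$; the natural candidate is the one with $w(s)=(L_{c(s)^{-1}})_*(W(s))$ proportional to $u(t(s))$, the $g_{y}$-unit tangent to the indicatrix. One checks, using Theorem \ref{thm-3} (or equivalently the first part of Theorem \ref{thm-2}, $N(s)=-\mathrm{L}_\eta w(s)$), that $u(t(s))$, or a suitable scalar multiple of it, is indeed parallel: since the indicatrix is two-dimensional minus the radial line, the $g_y$-orthonormal frame $\{y/F, u\}$ along the indicatrix is essentially rigid, and the parallel transport equation of Theorem \ref{thm-3} can only rescale $u$ — in fact $g_{y(s)}(w(s),w(s))$ is constant along a geodesic, so $w(s)=u(t(s))$ works up to a constant. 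Feeding this into Lemma \ref{lemma-1} gives
\[
\mathbf{L}_{\dot c(s)}(W(s),W(s),W(s))=\tfrac{{\rm d}}{{\rm d}s}\mathbf{C}_{y(t(s))}(u(t(s)),u(t(s)),u(t(s))),
\]
and the left side vanishes identically exactly when $F$ is Landsberg. Since $s$ and $t$ are related by the smooth change of variable (\ref{015}) with nonvanishing derivative, $\tfrac{{\rm d}}{{\rm d}s}(\cdots)\equiv 0$ is equivalent to $\tfrac{{\rm d}}{{\rm d}t}\mathbf{C}_{y(t)}(u(t),u(t),u(t))\equiv 0$, which is (\ref{012}).

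The main obstacle I anticipate is the verification that $u(t(s))$ (or its appropriate constant rescaling) is genuinely linearly parallel along $c(s)$ — i.e. that in the parallel transport ODE of Theorem \ref{thm-3} the correction terms $N(y,w)+[y,w]$ only act radially (equivalently, by Theorem \ref{thm-2}, that $\mathrm{L}_\eta u$ has no $u$-component, so parallel transport preserves the line $\mathbb{R}u(t(s))$). This is where the low dimensionality is essential: modulo the radial direction the indicatrix is one-dimensional, so there is essentially no room for $w(s)$ to rotate, and constancy of $g_{y(s)}(w(s),w(s))$ along geodesics pins down the scalar. I would phrase this either by a direct computation with the explicit formulas (\ref{010}), (\ref{011}), (\ref{026}) for $g_y$, $\mathbf{C}_y$, $\eta$ in polar coordinates — the same style of calculation already used for Lemma \ref{lemma-9} — or, more cleanly, by the abstract observation that along the indicatrix $\{y/F(y), u\}$ is a $g_y$-orthonormal basis and the spray structure cannot map $u$ outside $\mathbb{R}(y/F)\oplus\mathbb{R}u$, combined with the fact that $F(\dot c)$ and $g_{\dot c}(W,W)$ are geodesic invariants. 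Everything else is bookkeeping with the symmetry and radial-degeneracy properties of $\mathbf{C}_y$ and $\mathbf{L}_y$.
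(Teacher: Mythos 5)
Your proposal is correct and follows essentially the same route as the paper: reparametrize the indicatrix by (\ref{015}) so that it becomes an integral curve of $-\eta$ (Lemma \ref{lemma-5}), lift it to an $F$-unit geodesic via Theorem \ref{thm-1}, take the left-translated $U(s)=(L_{c(s)})_*(u(t(s)))$ as the linearly parallel field (which the paper asserts and you justify, correctly, by $2$-dimensionality plus the invariance of $g_{\dot c}$ under parallel transport along a geodesic), apply Lemma \ref{lemma-1}, and reduce the converse to the single component $\mathbf{L}_{y}(u,u,u)$ using $\mathbf{L}_y(y,\cdot,\cdot)=0$ exactly as the paper decomposes an arbitrary parallel field as $a\dot c+bU$. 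The only nit is your parenthetical claim that $g_y$ also vanishes on radial entries ($g_y(\partial_r,\partial_r)=2f(t)\neq0$); this is not used anywhere and does not affect the argument.
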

\begin{proof} We first assume $(G,F)$ has vanishing Landsberg curvature and prove (\ref{012}).
Apply the $s$-parameter in (\ref{015}), we see by Lemma \ref{lemma-5} that
$y(t(s))$ is an integral curve of $-\eta$. Then Theorem \ref{thm-1} provides an $F$-unit geodesic $c(s)$ on $(G,F)$, satisfying $y(t(s))=(L_{c(s)^{-1}})_*(\dot{c}(s))$. The vector field $U(s)=(L_{c(s)})_*(u(t(s)))$ is a linearly parallel vector field along $c(s)$.
By Lemma \ref{lemma-1}, we have $\tfrac{{\rm d}}{{\rm d} s}\mathbf{C}_{\dot{c}(s)}(U(s),U(s),U(s))\equiv0$. Since
$(L_{c(s)^{-1}})_*$ is a linear isometry between $F(c(s),\cdot)$ and $F(e,\cdot)$,
\begin{eqnarray*}& &
\mathbf{C}_{\dot{c}(s)}(U(s),U(s),U(s))\\ &=&
\mathbf{C}_{(L_{c(s)^{-1}})_*(\dot{c}(s))}((L_{c(s)^{-1}})_*(U(s)),
(L_{c(s)^{-1}})_*(U(s)), (L_{c(s)^{-1}})_*(U(s)), )\\
&=& \mathbf{C}_{y(t(s))}(u(t(s)),u(t(s)),u(t(s))).
\end{eqnarray*}
So we have
\begin{eqnarray*}
\tfrac{{\rm d}}{{\rm d} s}\mathbf{C}_{y(t(s))}(u(t(s)),u(t(s)),u(t(s)))= \tfrac{{\rm d}}{{\rm d} s}t(s)\cdot\tfrac{{\rm d}}{{\rm d} t}|_{t=t(s)}\mathbf{C}_{y(t)}(u(t),u(t),u(t))=0,\quad\forall s,
\end{eqnarray*}
i.e., $ \tfrac{{\rm d}}{{\rm d} t}\mathbf{C}_{y(t)}(u(t),u(t),u(t))\equiv0$. This proves one side of Lemma \ref{lemma-2}.

Reversing above argument, the other side of Lemma \ref{lemma-2} can be proved similarly. Notice that when we use Lemma \ref{lemma-1} to prove the Landsberg curvature vanishes, we only need to consider an $F$-unit speed geodesic $c(s)$. The corresponding $(L_{c(s)^{-1}})_*(\dot{c}(s))$ coincides with $y(t(s-s_0))$ for some suitable $s_0$. For any linearly parallel vector field $W(s)$ along $c(s)$, we can find $a,b\in\mathbb{R}$, such that $W(s)=a\dot{c}(s)+bU(s)$, in which
$U(s)=(L_{c(s)})_*(u(s-s_0))$.
Since $\mathbf{C}_{\dot{c}(s)}(\dot{c}(s),\cdot,\cdot)\equiv0$, we only need to check the requirement in Lemma \ref{lemma-1} for $U(s)$, which is guaranteed by (\ref{012}) through the action of the linear isometry $(L_{c(s)})_*$. This ends the proof of Lemma \ref{lemma-2}.
\end{proof}\medskip

\begin{proof}[Proof of Theorem B]
The theme of the proof is similar to that for Theorem A. We first consider the case that $\mathrm{ad}(y_0)$
has a positive eigenvalue. We choose the basis $\{ e_1,e_2\}$ of $\mathfrak{g}$ such that
$[e_1,e_2]=e_2$ and $y_0\in\mathbb{R}_{>0}$, and apply all above notations and calculations
in polar coordinate.

Plug (\ref{011}) and (\ref{016}) into (\ref{012}), we get the ODE charaterizing a left invariant conic Landsberg $(G,F)$,
\begin{eqnarray}\label{017}
\tfrac{{\rm d}}{{\rm d} t}\left(\tfrac{\tfrac{1}{f(t)}\tfrac{{\rm d}}{{\rm d} t}f(t)+\tfrac1{4f(t)}\tfrac{{\rm d}^3}{{\rm d}t^3}f(t)}{
\tfrac{1}{8f(t)^3}\left(2f(t)
\tfrac{{\rm d}^2}{{\rm d}t^2}f(t)-\left(\tfrac{{\rm d}}{{\rm d} t}f(t)\right)^2+4f(t)^2\right)^{3/2}}\right)=0.
\end{eqnarray}
For any initial value $(f(0),\tfrac{{\rm d}}{{\rm d} t}f(0),\tfrac{{\rm d}^2}{{\rm d}t^2}f(0),\tfrac{{\rm d}^3}{{\rm d}t^3}f(0))\in\mathcal{M}$, i.e., it satisfies
\begin{equation}
f(0)>0,\quad \tfrac{{\rm d}}{{\rm d}t}f(0)\neq0,\quad 2f(0)\tfrac{{\rm d}^2}{{\rm d}t^2}f(0)-\left(\tfrac{{\rm d}}{{\rm d} t}f(0)\right)^2+4f(0)^2>0,
\end{equation}
a unique $f(t)$ can be solved from (\ref{017}) for $t$ sufficiently close to $0$, such that
the corresponding $F=r\sqrt{2f}$ is a left conic Landsberg metric on $G$.

Nextly, we consider the case that $\mathrm{ad}(y_0)$ has a negative eigenvalue. The argument is similar. The equation characterizing the Landsberg property and the requirement for initial value
are unchanged.
%
%
%
\end{proof}

\subsection{Conic Berwald metrics}
\label{subsection-3-4}
In this subsection, we discuss when the left invariant conic Finsler metric $F=r\sqrt{2f(t)}$ in Section
 \ref{subsection-3-1} is Berwald.

\begin{lemma}\label{lemma-8}
Suppose that the left invariant $F=r\sqrt{2f(t)}$ is a conic Berwald metric defined for $t$ close to $0$,
which has nonvanishing spray vector field everywhere, then there exists $(a,b,c,d)\in\mathbb{R}^4\backslash\{0\}$ satisfying $a+d\geq0$, $ad-bc>0$ and $ac\neq0$, such that
$(ay^1+by^2)\tfrac{\partial}{\partial y^1}F+(cy^1+dy^2)\tfrac{\partial}{\partial y^2}F=0$ is valid wherever it is defined.
\end{lemma}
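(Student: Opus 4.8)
The plan is to use the characterization of the Berwald property from Lemma \ref{lemma-7}: $F=r\sqrt{2f(t)}$ is Berwald if and only if its spray vector field $\eta$ is quadratic in $y=y^1e_1+y^2e_2$. By the formula in Lemma \ref{lemma-9} (with $[e_1,e_2]=e_2$, so $\epsilon_1=0,\epsilon_2=1$), $\eta(y)$ is always a scalar multiple of the vector field $\partial_t-\tfrac{r}{2f(t)}\tfrac{{\rm d}}{{\rm d}t}f(t)\partial_r$, which is tangent to the indicatrix direction; concretely $\eta(y)=\phi(t)\cdot\bigl((-y^2-\tfrac{y^1}{2f}\tfrac{{\rm d}}{{\rm d}t}f)e_1+(y^1-\tfrac{y^2}{2f}\tfrac{{\rm d}}{{\rm d}t}f)e_2\bigr)$ for an explicit scalar function $\phi$ homogeneous of degree $0$. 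If $\eta$ is quadratic, then its two components $\eta^1(y),\eta^2(y)$ are (necessarily homogeneous degree-two) quadratic forms in $(y^1,y^2)$. The first step is therefore to write $\eta^i(y)=\langle y, M_i y\rangle$ for symmetric $2\times 2$ matrices $M_1,M_2$, and to reinterpret the defining relation of $\eta$. Since $\eta(y)$ has the specific meaning that it is $g_y$-orthogonal to $y$ and satisfies $g_y(\eta(y),u)=g_y(y,[u,y])$, one gets $g_y(y,\eta(y))=0$, i.e., $y$ lies in a distinguished direction for the quadratic map $y\mapsto\eta(y)$.

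The key observation I would exploit is that, by Lemma \ref{lemma-5}, after the reparametrization $s=s(t)$ of (\ref{015}) the indicatrix curve $y(t(s))$ is an integral curve of $-\eta$; but since $\eta$ is homogeneous of degree one and now quadratic (hence linear when restricted to the degree of freedom transverse to the Euler direction), the flow of $\eta$ on $\mathfrak{g}\backslash\{0\}$ is governed by a \emph{linear} ODE $\tfrac{d}{ds}y=-\eta(y)$ only if $\eta$ is in fact linear, which it need not be; instead I would argue more directly. Write $\eta(y)=A(y)\,y'$ where $y'$ denotes the $90^\circ$-type vector $(-y^2,y^1)$ plus a multiple of $y$ — more precisely, since $\partial_t-\tfrac{r}{2f}\tfrac{{\rm d}}{{\rm d}t}f\,\partial_r$ expands as above, $\eta(y)=\alpha(y)(-y^2 e_1+y^1 e_2)+\beta(y)(y^1e_1+y^2e_2)$ with $\alpha,\beta$ homogeneous of degree $0$. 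For $\eta$ to be quadratic, $\alpha(y)$ and $\beta(y)$ must both be ratios of homogeneous quadratics that reduce to \emph{linear} forms divided by nothing — i.e., $\alpha(y)=\ell_1(y)/\!\!$(something) forces $\alpha$ and $\beta$ to be linear forms $\ell_\alpha(y),\ell_\beta(y)$ in $(y^1,y^2)$. So $\eta(y)=(\ell_\alpha(y)(-y^2)+\ell_\beta(y)y^1)e_1+(\ell_\alpha(y)y^1+\ell_\beta(y)y^2)e_2$, and setting $a,b,c,d$ to be the coefficients of the linear form $\bigl(\begin{smallmatrix}-\ell_\alpha&\ell_\beta\\\ell_\beta&\ell_\alpha\end{smallmatrix}\bigr)$ appropriately, one finds $\eta(y)$ is (up to sign/scale) the vector field $(ay^1+by^2)\partial_{y^1}+(cy^1+dy^2)\partial_{y^2}$ — call it $X$. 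Since $\eta$ is $g_y$-orthogonal to $y$, and since along the indicatrix $\tfrac{d}{dt}F\equiv 0$ with $\tfrac{d}{dt}y$ proportional to $\eta$, one deduces $X F=(ay^1+by^2)\partial_{y^1}F+(cy^1+dy^2)\partial_{y^2}F=0$ wherever $F$ is defined, which is exactly (\ref{028}).

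To finish I must extract the inequalities. That $(a,b,c,d)\neq 0$ is clear since $\eta$ is nowhere zero. That $ac\neq 0$: if $c=0$ then $X=(ay^1+by^2)\partial_{y^1}$ is proportional to $e_1$, forcing $\eta$ to be a multiple of $e_1$, which contradicts $\eta$ being a nonzero multiple of the indicatrix-tangent direction $\partial_t-\cdots$ at generic $t$ (this direction has a nonvanishing $e_2$-component near $t=0$ by (\ref{026}), where $\tfrac{d}{dt}y(0)$ has $e_2$-component $1$); similarly if $a=0$ then $X$ is a multiple of $e_2$ and a parallel contradiction arises from examining $t$ near $0$ versus the behaviour of (\ref{026}). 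That $ad-bc>0$: the matrix $\bigl(\begin{smallmatrix}a&b\\c&d\end{smallmatrix}\bigr)$ is, up to the scalar $\phi(t)$, the derivative $D\eta$ restricted to the indicatrix; strong convexity (Lemma \ref{lemma-3}, via (\ref{010})) together with the orientation of the parametrization forces the flow of $-\eta$ to wind around the origin with a fixed sense, which translates into the discriminant condition $4\,\det>(\mathrm{tr})^2$ — actually here it suffices to show $\det>0$, and one reads this off from the fact that the integral curves of $-\eta$ on the punctured plane are the indicatrix and its dilates, closed-curve-like near $e_1$, so $X$ has no real eigendirection there and hence $\det(\begin{smallmatrix}a&b\\c&d\end{smallmatrix})>(\tfrac{a+d}{2})^2\geq 0$; a fortiori $ad-bc>0$. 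The normalization $a+d\geq 0$ is achieved by replacing $(a,b,c,d)$ with its negative if necessary, which preserves (\ref{028}) and all the other conditions.

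The main obstacle I anticipate is the step where I claim that quadraticity of $\eta$ forces the degree-zero scalar coefficients $\alpha(y),\beta(y)$ in the indicatrix-direction decomposition to be \emph{linear} forms in $y$ — equivalently, that the complicated rational expression $\phi(t)$ from Lemma \ref{lemma-9}, once combined with $\cos t,\sin t$ and $\tfrac{d}{dt}f$, assembles into a genuine polynomial. Making this rigorous requires a careful bookkeeping argument on homogeneous rational functions on $\mathfrak{g}\backslash\{0\}$: $\eta^i$ is a priori only a smooth degree-one function, and one must show that smoothness at all of $\mathfrak{g}\backslash\{0\}$ plus the polynomial identity it satisfies (coming from $g_y(\eta(y),u)=g_y(y,[u,y])$, whose right-hand side is polynomial in $y$ while the left involves $g_y$) pins it down to be polynomial of degree two. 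I would handle this by clearing denominators in the defining identity (\ref{008}) using $g_y(\partial_r,\partial_r)=2f$ and (\ref{010}), writing everything as an identity of homogeneous functions, and invoking that a degree-two homogeneous smooth function on $\mathbb{R}^2\backslash\{0\}$ that equals a quadratic is the quadratic; the delicate part is ensuring no spurious $\sqrt{\ }$ or non-polynomial dependence on $f$ survives, which is where the specific structure of (\ref{026}) and the hypothesis $\tfrac{d}{dt}f(0)\neq 0$ get used.
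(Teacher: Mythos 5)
There are two genuine gaps here. The first is the one you yourself flag as the ``main obstacle'', and it is not a bookkeeping issue that clearing denominators will repair: knowing that $\eta$ is a homogeneous quadratic polynomial map does \emph{not} imply that it factors as a linear form times a linear vector field. If you write $\eta(y)=\ell_\alpha(y)(-y^2e_1+y^1e_2)+\ell_\beta(y)(y^1e_1+y^2e_2)$, then necessarily $\ell_\alpha=\tfrac{y^1\eta^2-y^2\eta^1}{(y^1)^2+(y^2)^2}$ and $\ell_\beta=\tfrac{y^1\eta^1+y^2\eta^2}{(y^1)^2+(y^2)^2}$, and for a general quadratic $\eta$ (e.g.\ $\eta=((y^1)^2,(y^2)^2)$) these are not linear forms. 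So the existence of a \emph{linear} vector field $X$ tangent to the indicatrix does not follow from your decomposition, and this is precisely the content of the lemma. The paper obtains the linear field from a different object: by Lemma \ref{lemma-7} the Berwald condition is equivalent to the connection operator $N$ being bilinear, so $y\mapsto N(y,e_1)=\tfrac12 D\eta(y,e_1)-\tfrac12[y,e_1]$ is automatically linear, and nonzero because $N(e_1,e_1)=\eta(e_1)\neq0$; moreover, setting $u=y$, $v=e_1$ in (\ref{009}) gives $g_y(N(y,e_1),y)=0$, since the two bracket terms cancel, $[e_1,y]$ pairs off, and the Cartan tensor vanishes when an entry is $y$. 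Writing $N(y,e_1)=(ay^1+by^2)e_1+(cy^1+dy^2)e_2$ and using $g_y(e_i,y)=F\,\tfrac{\partial}{\partial y^i}F$ then yields (\ref{028}) directly. This use of $N(\cdot,e_1)$ in place of $\eta$ is the idea your proposal is missing.

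The second gap is your derivation of $ad-bc>0$: you argue that the matrix $A$ has no real eigendirection because the integral curves of $-\eta$ wind around the origin, hence $\det A>(\tfrac{a+d}{2})^2$. That is false in this setting --- the metric is only conic, the indicatrix is a short arc near $e_1$ rather than a closed curve, and Cases 2 and 3 of Section \ref{subsection-3-4} exhibit Berwald metrics of exactly the present type whose matrix $A$ has real eigenvalues (two distinct positive ones, or a single eigenvalue with a nontrivial Jordan block). Your argument would wrongly exclude them. The correct argument is local: strong convexity lets you write the indicatrix near $e_1/F(e_1)$ as a graph $y^1=y^1(y^2)$ with $\tfrac{{\rm d}}{{\rm d}y^2}y^1=\tfrac{ay^1+by^2}{cy^1+dy^2}$ as in (\ref{022}) (which already forces $c\neq0$, while $a\neq0$ follows from $\tfrac{{\rm d}}{{\rm d}t}f(0)\neq0$), and then $\tfrac{{\rm d}^2}{{\rm d}(y^2)^2}y^1(0)=\tfrac{bc-ad}{c^2y^1(0)}$ must be negative, giving $ad-bc>0$. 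Your normalization $a+d\geq0$ by an overall sign change is fine and agrees with the paper.
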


\begin{proof}
By Lemma \ref{lemma-7}, the connection operator $N$ of $F$ is a bilinear map. Since $\eta(e_1)=N(e_1,e_1)\neq0$, $N(\cdot,e_1)$ is a nonzero linear map. Denote $N(y,e_1)=(ay^1+by^2)e_1+(cy^1+dy^2)e_2$, with $(a,b,c,d)\in\mathbb{R}^4\backslash\{0\}$.
By (\ref{009}),
\begin{eqnarray*}
g_y(N(y,e_1),y)&=&(ay^1+by^2)g_y(e_1,y)+(cy^1+dy^2)g_y(e_2,y)\\
&=&
(ay^1+by^2)F(y)\tfrac{\partial}{\partial y^1}F(y)+(cy^1+dy^2) F(y)\tfrac{\partial}{\partial y^2}F(y)\\
&=&0,
\end{eqnarray*}
so we get $(ay^1+by^2)\tfrac{\partial}{\partial y^1}F+(cy^1+dy^2)\tfrac{\partial}{\partial y^2}F=0$
at each possible $y=y^1e_1+y^2e_2$.

Because we can replace $(a,b,c,d)$ by $(-a,-b,-c,-d)$, $a+d\geq0$ can be achieved.
By the strong convexity of $F$, its indicatrix $F=1$ can not be tangent to the $y^1$-axis, i.e.,
$F=1$ can be interpreted as a positive smooth function $y^1=y^1(y^2)$ which is defined for $y^2$ close to $0$. Then
\begin{equation}\label{022}
\tfrac{{\rm d}}{{\rm d}y^2}y^1(y^2)=-(\tfrac{\partial}{\partial y^1}F)^{-1}(\tfrac{\partial}{\partial y^2}F)=-\tfrac{2f(t)\sin t+\tfrac{{\rm d}}{{\rm d}t}f(t)\cos t}{2f(t)\cos t-\tfrac{{\rm d}}{{\rm d}t}f(t)\sin t}=\tfrac{ay^1+by^2}{cy^1+dy^2}
\end{equation} can be evaluated at $y_2=0$, which provides $c\neq0$. From (\ref{022}), we also see
that $\eta(y(0))\neq0$, i.e., $\tfrac{{\rm d}}{{\rm d}t}f(0)\neq0$, implies $a\neq0$.

Finally, the strong convexity of $F$ implies $y_1=y_1(y_2)$ has negative second derivative at $y_2=0$. So we have
$$\tfrac{{\rm d}^2}{{\rm d}y^2}y^1(0)=\tfrac{(a\tfrac{{\rm d}}{{\rm d }y^2}y^1(0)+b)\cdot(cy^1+dy^2)-(ay^1+by^2)\cdot(c\tfrac{{\rm d}}{{\rm d}y^2}y^1(0)+d)}{(cy^1+dy^2)^2}|_{y^2=0}=\tfrac{bc-ad}{c^2y^1(0)}<0,$$
which proves $ad-bc>0$.
\end{proof}

The indicatrix $F=1$ can be parametrized as $y^1(\theta)e_1+y^2(\theta)e_2$, such that
\begin{equation}\label{023}
\tfrac{{\rm d}}{{\rm d}\theta}\left(
                                  \begin{array}{c}
                                    y^1(\theta) \\
                                    y^2(\theta)\\
                                  \end{array}
                                \right)=\left(
                                          \begin{array}{cc}
                                            a & b \\
                                            c & d \\
                                          \end{array}
                                        \right)\left(
                                                 \begin{array}{c}
                                                   y^1(\theta) \\
                                                   y^2(\theta) \\
                                                 \end{array}
                                               \right).
\end{equation}
The solution of (\ref{023}) is $\left(
                                  \begin{array}{c}
                                    y^1(\theta) \\
                                    y^2(\theta)\\
                                  \end{array}
                                \right)=\exp(\theta\left(
                                          \begin{array}{cc}
                                            a & b \\
                                            c & d \\
                                          \end{array}
                                        \right))\left(
                                                 \begin{array}{c}
                                                   c_1 \\
                                                   c_2\\
                                                 \end{array}
                                               \right),$
where $c_1$ and $c_2$ are free parameters.
All possibilities of $A$ which satisfies the requirements in Lemma \ref{lemma-8} are the following.

{\bf Case 1}. The eigenvalues of $A$ are $\lambda_1 \pm\lambda_2\sqrt{-1}$, with $\lambda_1\geq0$ and $\lambda_2>0$. Then we can find a basis $\{\overline {e}_1,\overline{e}_2\}$ of $\mathfrak{g}$ such that 
$e_1\in\mathbb{R}_{>0}\overline {e}_1+\mathbb{R}_{>0}\overline{e}_2$,
and
$$(\lambda_1\overline{y}^1-\lambda_2\overline{y}^2)\tfrac{\partial}{\partial\overline{y}^1}F
+(\lambda_2\overline{y}^1+\lambda_1\overline{y}^1)\tfrac{\partial}{\partial\overline{y}^2}F=0$$
is satisfied everywhere. Here we use
$(\overline{y}^1,\overline{y}^2)$ and $(\overline{r},\overline{t})$ to denote the coordinates in
\begin{equation}\label{024}
y=\overline{y}^1\overline{e}_1+\overline{y}^2\overline{e}_2
=\overline{r}\cos\overline{t} \overline{e}_1
+\overline{r}\sin\overline{t} \overline{e}_2.
\end{equation}
Then 
we have the new polar coordinate representation $F=\overline{r}\sqrt{2\overline{f}(\overline{t})}$ with
$\overline{f}(\overline{t})=\mu e^{-2\lambda \overline{t}+\mu}$, in which $\lambda=\tfrac{\lambda_1}{\lambda_2}$ and $\mu>0$. In this case, the indicatrix $F=1$ is a segment of spiral curve when $\lambda>0$ and a segment of circle when $\lambda=0$.

All discussions in Section \ref{subsection-3-1} before (\ref{026}) are valid for  the new coordinates.
Apply Lemma \ref{lemma-9} to the case $\overline{f}(\overline{t})=\tfrac{1}{\sqrt{2}}e^{-2\lambda\overline{t}-2\mu}$, we can calculate
the spray vector field $\eta$ of $F=\overline{r}\sqrt{2\overline{f}(\overline{t})}$ and get
\begin{equation}\label{031}
\eta=-\tfrac{(\epsilon_1\overline{y}^1+\epsilon_2\overline{y}^2)
+\lambda(-\epsilon_2\overline{y}^1+\epsilon_1\overline{y}^2)}{\lambda^2+1}\cdot
\left((\lambda\overline{y}^1-\overline{y}^2)\overline{e}_1
+(\overline{y}^1+\lambda\overline{y}^2)\overline{e}_2\right),
\end{equation}
in which $\epsilon_1$ and $\epsilon_2$ are the real numbers in
$[\overline{e}_1,\overline{e}_2]=\epsilon_1\overline{e}_1+\epsilon_2\overline{e}_2$ (same below).
We see from (\ref{031}) that $\eta$ is quadratic, so $F$ is Berwald by Lemma \ref{lemma-7}.
In particular, $F$ is Riemannian if and only if $\lambda=\lambda_1=0$.

{\bf Case 2}. $A$ has two distinct eigenvalues $\lambda_1$ and$\lambda_2$ satisfying $\lambda_1>\lambda_2>0$. We can find a basis $\{\overline {e}_1,\overline{e}_2\}$ of $\mathfrak{g}$, such that $e_1\in\mathbb{R}_{>0}\overline {e}_1+\mathbb{R}_{>0}\overline{e}_2$ and
$\lambda_1\overline{y}^1\tfrac{{\partial}}{\partial\overline{y}^1}F+
\lambda_2\overline{y}^2\tfrac{\partial}{\partial \overline{y}^2}F=0$ is satisfied everywhere.
Here we apply the coordinates $(\overline{y}^1,\overline{y}^2)$ and
$(\overline{r},\overline{t})$
in (\ref{024}). Then we have the new polar coordinate representation
\begin{equation*}
F=\overline{r}\sqrt{2\overline{f}(\overline{t})}\quad\mbox{with}
\quad\overline{f}(\overline{t})=\mu \cos^{2-\lambda}\overline{t}\sin^\lambda\overline{t},
\end{equation*}
 in which
the constants $\mu>0$ and $\lambda=\tfrac{2\lambda_1}{\lambda_1-\lambda_2}>2$. In this case,
the indicatrix $F=1$ is a segment on the graph $\overline{y}^1=c'\overline{y}^{\lambda_1/\lambda_2}$ where $c'$ is a positive constant.

We use Lemma \ref{lemma-9} to calculate the spray vector field $\eta$ of $F$. Noticing that
$$\tfrac{1}{\overline{f}(\overline{t})}\tfrac{{\rm d}}{{\rm d}\overline{t}}\overline{f}(\overline{t})
=(\lambda-2)\tan\overline{t}+\lambda\cot\overline{t}=(\lambda-2)\cdot\tfrac{\overline{y}^2}{\overline{y}^1}
+\lambda\cdot\tfrac{\overline{y}^1}{\overline{y}^2},$$ we get
\begin{eqnarray*}
\eta&=&-\tfrac{\left(\tfrac{\overline{y}^2}{\overline{y}^1}+\tfrac{\overline{y}^1}{\overline{y}^2}\right)
\left(2\epsilon_1(2-\lambda)\overline{y}^2+2\epsilon_2\lambda\overline{y}^1\right)}{
(\lambda^2-2\lambda)\left(\tfrac{\overline{y}^2}{\overline{y}^1}+\tfrac{\overline{y}^1}{\overline{y}^2}\right)^2}
\cdot\left(-\tfrac{\lambda}{2\overline{y}^2}((\overline{y}^1)^2+(\overline{y}^2)^2)\overline{e}_1
+\tfrac{2-\lambda}{2\overline{y}^1}((\overline{y}^1)^2+(\overline{y}^2)^2)\overline{e}_2\right)\\
&=&-\tfrac{\epsilon_1(2-\lambda)\overline{y}^2+\epsilon_2\lambda\overline{y}^1}{
\lambda^2-2\lambda}\cdot(-\lambda\overline{y}^1\overline{e}_1+
(2-\lambda)\overline{y}^2\overline{e}_2),
\end{eqnarray*}
which is quadratic. So $F$ is Berwald in this case.

{\bf Case 3}. $A$ is not semi simple. Let $\lambda>0$ be the eigenvalue of $A$, then we can find
a basis $\{\overline{e}_1,\overline{e}_2\}$ of $\mathfrak{g}$, such that $e_1\in\mathbb{R}_{>0}\overline{e}_1+
\mathbb{R}_{>0}\overline{e}_2$ and $\lambda\overline{y}^1\tfrac{{\partial}}{\partial\overline{y}^1}F+
(\overline{y}^1+\lambda\overline{y}^2)\tfrac{\partial}{\partial \overline{y}^2}F=0$ is satisfied everywhere. Here we apply the coordinates $(\overline{y}^1,\overline{y}^2)$ and $(\overline{r},\overline{t})$ in (\ref{024}).
Using the new polar coordinate $(\overline{r},\overline{t})$,
we have the representation $F=\overline{r}\sqrt{2\overline{f}(\overline{t})}$, where
$\overline{f}(\overline{t})=\mu\cos^2\overline{t} e^{-2\lambda\tan\overline{t}}$ and
$\mu>0$. The indicatrix $F=1$ in this case is a segment on the graph $\overline{y}^2=c_1\overline{y}^1\ln\overline{y}^1+c_2\overline{y}^1$, where $c_1>0$ and $c_2$ are some constants.

We apply Lemma \ref{lemma-9} to calculate the spray vector field $\eta$ of $F$.
Using the fact $$ \tfrac{1}{\overline{f}(\overline{t})}\tfrac{{\rm d}}{{\rm d}\overline{t}}\overline{f}(\overline{t})
=-2\tan\overline{t}-2\lambda\sec^2\overline{t}=-\tfrac{2\overline{y}^2}{\overline{y}^1}-
2\lambda\left(1+\left(\tfrac{\overline{y}^2}{\overline{y}^1}\right)^2\right),$$
we see that
\begin{eqnarray*}
\eta&=&-\tfrac{4((\epsilon_1-\lambda\epsilon_2)\overline{y}^1+\lambda\epsilon_1\overline{y}^2)\cdot
\left(1+\left(\tfrac{\overline{y}^2}{\overline{y}^1}\right)^2\right)}{
4\lambda^2\left(1+\left(\tfrac{\overline{y}^2}{\overline{y}^1}\right)^2\right)^2
}\cdot\left(\lambda\overline{y}^1\left(1+\left(\tfrac{\overline{y}^2}{\overline{y}^1}\right)^2\right)
\overline {e}_1+(\overline{y}^1+\lambda\overline{y}^2)\left(1+\left(\tfrac{\overline{y}^2}{\overline{y}^1}\right)^2
\right)\overline{e}_2\right)\\
&=&-\tfrac{((\epsilon_1-\lambda\epsilon_2)\overline{y}^1+\lambda\epsilon_1\overline{y}^2)}{
\lambda^2
}\cdot\left(\lambda\overline{y}^1
\overline{e}_1+(\overline{y}^1+\lambda\overline{y}^2)\overline{e}_2\right)
\end{eqnarray*}
is quadratic. So $F$ is Berwald in this case.

Above case-by-case discussion can be summarized as the following proposition.
\begin{proposition}\label{prop-1}
For each $(a,b,c,d)\in\mathbb{R}^4$ satisfying $ac\neq0$ and $ad-bc>0$, a left invariant conic Berwald metric $F$ can be found, which satisfies
$(ay^1+by^2)\tfrac{\partial}{\partial y^1}F+(cy^1+dy^2)\tfrac{\partial}{\partial y^2}F=0$
and has a nowhere vanishing spray vector field around $e_1$.
\end{proposition}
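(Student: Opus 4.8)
Since Proposition \ref{prop-1} is exactly the converse of Lemma \ref{lemma-8}, the plan is to reverse the construction used there, organised according to the real canonical form of the matrix $A=\bigl(\begin{smallmatrix}a&b\\c&d\end{smallmatrix}\bigr)$. First I would note that equation (\ref{028}) is unchanged under $(a,b,c,d)\mapsto(-a,-b,-c,-d)$, and that this replacement preserves both hypotheses $ac\ne0$ and $ad-bc>0$; hence there is no loss of generality in assuming $\mathrm{tr}\,A=a+d\ge0$. Combined with $\det A=ad-bc>0$, inspection of the characteristic polynomial $\lambda^{2}-(a+d)\lambda+(ad-bc)$ shows that the eigenvalues of $A$ necessarily realise exactly one of the three patterns singled out above: a complex--conjugate pair $\lambda_{1}\pm\lambda_{2}\sqrt{-1}$ with $\lambda_{1}\ge0$, $\lambda_{2}>0$ (Case 1); two distinct positive reals $\lambda_{1}>\lambda_{2}>0$ (Case 2); or a single positive real eigenvalue with $A$ not semisimple (Case 3). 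A repeated eigenvalue with $A$ semisimple would force $A=\lambda\,\mathrm{Id}$, hence $c=0$, contradicting $ac\ne0$; so Cases 1--3 are exhaustive.

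Next, for each of the three cases I would pick a basis $\{\overline{e}_1,\overline{e}_2\}$ of $\mathfrak{g}$ that puts $A$ in its normal form and places $e_1$ in the open cone $\mathbb{R}_{>0}\overline{e}_1+\mathbb{R}_{>0}\overline{e}_2$, the latter ensuring the metric constructed below is defined on a conic neighbourhood of $e_1$. Writing $y=\overline{y}^1\overline{e}_1+\overline{y}^2\overline{e}_2=\overline{r}\cos\overline{t}\,\overline{e}_1+\overline{r}\sin\overline{t}\,\overline{e}_2$ as in (\ref{024}), equation (\ref{028}) --- equivalently the graph identity (\ref{022}) --- says precisely that the indicatrix $\{F=1\}$ is a solution curve of the linear system (\ref{023}) for the normal-form matrix. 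Integrating (\ref{023}) and rewriting the orbit in polar form produces $F=\overline{r}\sqrt{2\overline{f}(\overline{t})}$ with $\overline{f}$ the explicit function recorded in Case 1, 2, 3 respectively (of the shape $\mu e^{-2\lambda\overline{t}}$, $\mu\cos^{2-\lambda}\overline{t}\,\sin^{\lambda}\overline{t}$, $\mu\cos^{2}\overline{t}\,e^{-2\lambda\tan\overline{t}}$, with $\mu>0$ and $\lambda$ of the sign dictated by the eigenvalues). One then checks that $\overline{f}>0$ and that the strong-convexity inequality of Lemma \ref{lemma-3} holds for $\overline{t}$ near the angle of $e_1$ --- this is where $ad-bc>0$ enters, being exactly the condition making the orbit of $A$ bend the right way --- so that $F$ is a bona fide conic Minkowski norm around $e_1$ satisfying (\ref{028}).

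It then remains to verify that $F$ is Berwald and has nowhere-vanishing spray vector field. Substituting each explicit $\overline{f}$ into Lemma \ref{lemma-9}, rewritten in the coordinates $(\overline{y}^1,\overline{y}^2)$, yields the closed forms for $\eta$ displayed in Cases 1--3; in every case the components of $\eta$ are homogeneous quadratic polynomials in $(\overline{y}^1,\overline{y}^2)$, so $F$ is Berwald by Lemma \ref{lemma-7}. For the non-vanishing, I would return to the original basis $\{e_1,e_2\}$ with $[e_1,e_2]=e_2$: there $F=r\sqrt{2f(t)}$ still satisfies (\ref{028}) with the original $(a,b,c,d)$, hence satisfies the graph identity (\ref{022}), and evaluating (\ref{022}) at $t=0$ (where $e_1$ lies) gives $\tfrac{{\rm d}}{{\rm d}t}f(0)=-2af(0)/c$, which is nonzero precisely because $a\ne0$, $c\ne0$ and $f(0)>0$. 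By (\ref{026}) this forces $\eta(e_1)\ne0$, and by continuity $\eta$ is nowhere zero on a neighbourhood of $e_1$. This proves the Proposition.

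The only real labour lies in the case-by-case steps of the second and third paragraphs: for each of the three normal forms one must integrate (\ref{023}), pass correctly to the polar representation $F=\overline{r}\sqrt{2\overline{f}(\overline{t})}$, verify Lemma \ref{lemma-3} for the resulting $\overline{f}$, and simplify the Lemma \ref{lemma-9} expression to the displayed quadratic --- the book-keeping of the change of basis and of the induced structure constants $\epsilon_1,\epsilon_2$ in $[\overline{e}_1,\overline{e}_2]=\epsilon_1\overline{e}_1+\epsilon_2\overline{e}_2$ through these computations being the part most prone to slips. The one structural observation --- that $ac\ne0$, $ad-bc>0$ and (after the sign normalisation) $a+d\ge0$ leave exactly Cases 1--3 --- is immediate from the real Jordan classification of $2\times2$ matrices, so no deep obstacle is expected.
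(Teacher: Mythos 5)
Your proposal is correct and follows essentially the same route as the paper: the published proof of Proposition \ref{prop-1} is exactly the case-by-case discussion in Section \ref{subsection-3-4} (Cases 1--3 according to the real canonical form of $A$), producing the explicit $\overline{f}(\overline{t})$, computing $\eta$ via Lemma \ref{lemma-9}, and invoking Lemma \ref{lemma-7}. Your added remarks --- that the sign normalisation $a+d\ge0$ is harmless, that the semisimple repeated-eigenvalue case is excluded by $ac\ne0$, and the explicit check $\tfrac{{\rm d}}{{\rm d}t}f(0)=-2af(0)/c\ne0$ for the non-vanishing of $\eta$ --- are all consistent with, and slightly more explicit than, the paper's treatment.
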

\subsection{There exists no conic unicorn}
\label{subsection-3-5}
In this subsection, we prove Theorem D, i.e.,
any left invariant conic Landsberg metric on a 2-dimensional non-Abelian Lie group $G$ must be Berwald.

Firstly, we assume that the left invariant conic Landsberg metric $F_1=r\sqrt{2f_1(\theta)}$ is defined for $t$ sufficiently close to $0$ and its spray vector field is non-vanishing everywhere.
Here the coordinates $(y^1,y^2)$ and $(r,\theta)$ are
with respect to a basis of $\mathfrak{g}$ satisfying $[e_1,e_2]=\pm e_2$.\medskip

\noindent{\bf Claim}: $F_1$ is Berwald.\medskip

\begin{proof}[Proof of the claim]
We first prove the claim when $[e_1,e_2]=e_2$. The theme of the proof is the following.
By Theorem B and its proof in Section \ref{subsection-3-3}, we see that $F_1$ is uniquely determined by $(a_0,a_1,a_2,a_3)=(f_1(0),\tfrac{{\rm d}}{{\rm d}t}f_1(0),\tfrac{{\rm d}^2}{{\rm d}t^2}f_1(0),\tfrac{{\rm d}^3}{{\rm d}t^3}f_1(0))$, which satisfies
$$a_0>0, \quad a_1\neq0, \quad 2a_0a_2-a_1^2+4a_0^2>0.$$
We will look for $(a,b,c,d)\in\mathbb{R}^4$ satisfying $ac\neq0$ and $ad-bc>0$, such that the conic
$F_2=r\sqrt{2f_2(t)}$ in
$(ay^1+by^2)\tfrac{\partial}{\partial y^1}F_2+(cy^1+dy^2)\tfrac{\partial}{\partial y^2}F_2=0$
share the same initial value as $F_1$, i.e., $(f_2(0),\tfrac{{\rm d}}{{\rm d}t}f_2(0),\tfrac{{\rm d}^2}{{\rm d}t^2}f_2(0),\tfrac{{\rm d}^3}{{\rm d}t^3}f_2(0))=(a_0,a_1,a_2,a_3)$. By Theorem C or Proposition \ref{prop-1}, we see $F_2$ is Berwald and has a nowhere vanishing spray vector field when $t$ is sufficiently close to $0$. To summarize, both $F_1$ and $F_2$ are Landsberg with the same initial value in $\mathcal{M}$. So we see $F_1=F_2$ by Theorem B, i.e., $F_1$ is Berwald.

Now we assume $(f_2(0),\tfrac{{\rm d}}{{\rm d}t}f_2(0),\tfrac{{\rm d}^2}{{\rm d}t^2}f_2(0),\tfrac{{\rm d}^3}{{\rm d}t^3}f_2(0))=(a_0,a_1,a_2,a_3)$ and look for  the suitable $(a,b,c,d)$. Since scalar changes for $f_1(t)$, $f_2(t)$ and $(a,b,c,d)$ do not affect our discussion, we may assume $a_0=f_1(0)=f_2(0)=\tfrac12$
and $c=1$. Denote $h(t)=\tfrac{1}{2f_2(t)}\tfrac{{\rm d}}{{\rm d}t}f_2(t)$, then we have
the expansion
$$h(t)=c_0 +c_1 t+c_2 t^2+o(t^2),\mbox{ with } c_0=a_1, c_1=a_2-2a_1^2, c_2=\tfrac12a_3-3a_1a_2+4a_1^3.$$
The requirement (\ref{021}) provides $c_0\neq0$ and $c_1+c_0^2+1
>0$. The equality (\ref{022})
provides
\begin{eqnarray*}
\label{027}
& &-c_0-(c_1+c_0^2+1)t-(c_2+2c_1c_0+c_0^3+c_0)t^2+o(t^2)=-\tfrac{\tan t+h(t)}{1-u(t)\tan t}
\nonumber\\ &=&\tfrac{a+b\tan t}{1+d\tan t}=a+(b-ad)t+(ad^2-bd)t^2+o(t^2).
\end{eqnarray*}
Comparing the coefficients in (\ref{027}),  the following equations can be found,
\begin{eqnarray*}
a=-c_0,\quad, ad-b=c_1+c_0^2+1,\quad bd-ad^2=c_2+2c_1c_0+c_0^3+c_0,
\end{eqnarray*}
which has the following unique solution,
\begin{eqnarray*}
&&a=-c_0,\quad d=-\tfrac{bd-ad^2}{ad-b}=-\tfrac{c_2+2c_1c_0+c_0^3+c_0}{c_1+c_0^2+1}\\ &&b=ad-(c_1+c_0^2+1)=\tfrac{c_2c_0+2c_1c^2_0+c_0^4+c^2_0}{c_1+c_0^2+1}-c_1-c_0^2-1.
\end{eqnarray*}
Because $c_0\neq0$ and $c_1+c_0^2+1>0$, the requirements $a\neq0$ and $ad-bc=ad-b>0$ are satisfied.

To summarize, we have found the suitable $(a,b,c,d)$ such that the corresponding Berwald $F_2$ share the same initial value as $F_1$, so $F_1=F_2$ is Berwald. Above argument proves our claim when $[e_1,e_2]=e_2$. The prove when $[e_1,e_2]=-e_2$ is almost the same.
\end{proof}\medskip

Nextly, we consider a more general left invariant Landsberg metric $F$ on $G$. Let $y$ be any point in $$\mathcal{U}=\{y| y\in\mathfrak{g}\backslash[\mathfrak{g},\mathfrak{g}],
F \mbox{ is defined around }y \mbox{ with } \eta(y)\neq0\}.$$
By the claim we have just proved, $F$ is Berwald around $y$, i.e., the Berwald tensor $\mathbf{B}^i_{jkl}:\mathfrak{g}\backslash\{0\}\rightarrow\mathbb{R}$, $\forall i,j,k,l$, vanishes.

To extend the vanishing of $\mathbf{B}^i_{jkl}$ to $[\mathfrak{g},\mathfrak{g}]\backslash\{0\}$ and those points where
the spray vector field $\eta$ vanishes, we only need to observe that $\eta$ can not vanish identically in
any open subset $\mathcal{V}$ of $\mathfrak{g}\backslash\{0\}$. Assume conversely it happens. Since $\eta$ is positively 2-homogeneous, we may assume $\mathcal{V}$ is conic. Then for any $y\in\mathcal{V}$ satisfying $F(y)=1$, we have
$$g_y(y,[\mathfrak{g},\mathfrak{g}])=g_y(y,[\mathfrak{g},y])=g_y(\eta(y),\mathfrak{g})=0,$$
i.e., the tangent line of the indicatrix $F=1$ at $y$ is parallel to $[\mathfrak{g},\mathfrak{g}]$. This results in that
the intersection between $\mathcal{V}$ and the indicatrix of $F$ is a straight line segment, which contradicts the strong convexity of $F$.

To summarize, we can get by continuity the vanishing of the Berwald tensor $\mathbf{B}^i_{jkl}$ at $e\in G$. Finally, we can use the left invariancy of $F$ to get the vanishing of the Berwald tensor everywhere.
So the left invariant conic Landsberg metric $F$ is Berwald, which ends the proof of Theorem D.

\section{Homogeneous conic Landsberg Problem in the 2-dimensional case}

\subsection{Homogeneous conic Finsler manifold}

Let $G/H$ be a homogeneous manifold (here $G$ is a general Lie group). We denote $\mathfrak{g}=\mathrm{Lie}(G)$ and $\mathfrak{h}=\mathrm{Lie}(H)$. Suppose that $F$ is a $G$-invariant conic Finsler metric on $G/H$. The tangent space $T_o(G/H)$ at $o=eH$ can be
identified as $\mathfrak{g}/\mathfrak{h}$, such that the isotropic $H$-action on $T_o(G/H)$
is identified with the $\mathrm{Ad}_{\mathfrak{g}/\mathfrak{h}}(H)$-action on $\mathfrak{g}/\mathfrak{h}$.
Then $F$ is one-to-one determined by its restriction
$F=F(o,\cdot)$ at $o=eH$, which is an arbitrary $\mathrm{Ad}_{\mathfrak{g}/\mathfrak{h}}(H)$-invariant conic Minkowski norm
on $\mathfrak{g}/\mathfrak{h}$.

Without loss of generality, we may assume $G$ acts effectively on $G/H$, because otherwise we can replace $G$ and $H$ by their images in $\mathrm{Diff}(G/H)$. This effectiveness implies that $\mathfrak{h}$ does not contain any nonzero ideal of $\mathfrak{g}$. Notice that
the compactly imbedded property of $H$ or a reductive decomposition for the conic $(G/H,F)$ may not be guaranteed from the effectiveness assumption. It implies that, on one hand,
the homogeneous conic Finsler manifold $G/H$ may look very different from those globally defined ones, and on the other hand, when there is no reductive decomposition, the spray vector field and the curvatures of $(G/H,F)$ may be
hard to compute \cite{Xu2022-3}.

The following basic technique is still valid.
Suppose $L$ is a Lie subgroup of $G$, with $\mathfrak{l}=\mathrm{Lie}(L)$, such that
$\mathfrak{g}=\mathfrak{l}+\mathfrak{h}$, then the $G$-invariant conic Finsler metric $F$
naturally induces a $L$-invariant conic Finsler metric $F'$ on $L/(L\cap H)$ through the
linear isomorphism $\mathfrak{g}/\mathfrak{h}\cong\mathfrak{l}/\mathfrak{l}\cap\mathfrak{h}$.
The $L$-actions on $G/H$ realize a local isometry between $(G/H,F)$ and $(L/(L\cap H),F')$. In particular, if we further have $\mathfrak{l}\cap\mathfrak{h}=0$, $F'$ on $L/(L\cap H)$ can be lifted to a left invariant
conic Finsler metric $F''$ on $L$, such that the covering map from $L$ to $L/(L\cap H)$ provides a local isometry between $(L/(L\cap H),F')$ and $(L,F'')$. We summarize this technique as the following lemma.

\begin{lemma}\label{lemma-10}
Let $(G/H,F)$ be a homogeneous conic Finsler manifold. Suppose $L$ is a Lie group of $G$ such that
$\mathfrak{g}=\mathfrak{h}+\mathfrak{l}$, i.e., $\dim\mathfrak{l}-\dim(\mathfrak{l}\cap\mathfrak{h})=\dim G/H$, then $F$ is locally isometric to a $L$-invariant conic Finsler metric on $L/(L\cap H)$. Moreover, if $\mathfrak{l}\cap\mathfrak{h}=0$, then $F$ is locally isometric to a left invariant conic Finsler metric on $L$.
\end{lemma}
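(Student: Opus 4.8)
The plan is to make the claimed local isometry completely explicit via the restricted $L$-action on $G/H$. First I would introduce the orbit map $\phi\colon L/(L\cap H)\to G/H$, $\ell(L\cap H)\mapsto \ell o$ with $o=eH$; it is well defined, smooth, and equivariant for the natural left $L$-actions on both sides. Its differential at the base point is the canonical linear map $\mathfrak{l}/(\mathfrak{l}\cap\mathfrak{h})\to\mathfrak{g}/\mathfrak{h}$, and the hypothesis $\mathfrak{g}=\mathfrak{h}+\mathfrak{l}$ says precisely that this map is surjective; since both spaces have dimension $\dim G/H$ (this is the content of $\dim\mathfrak{l}-\dim(\mathfrak{l}\cap\mathfrak{h})=\dim G/H$), it is a linear isomorphism. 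By $L$-equivariance the differential of $\phi$ is an isomorphism at every point, so $\phi$ is a local diffeomorphism onto an open, $L$-invariant neighborhood $U\subset G/H$ of $o$ (the $L$-orbit of $o$).

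Next I would transport the metric. Let $\mathcal{A}\subset T(G/H)\setminus 0$ be the conic open set on which $F$ is defined, and set $\mathcal{A}'=(d\phi)^{-1}(\mathcal{A})$. Because $d\phi$ is a fiberwise linear isomorphism and a local diffeomorphism of total spaces, $\mathcal{A}'$ is again conic, open, and avoids the zero section; it is nonempty in the fiber over each point of $U$ because $d\phi$ is fiberwise surjective and each $\mathcal{A}_y$ is nonempty. Define $F'$ on $\mathcal{A}'$ by $F'(v)=F(d\phi(v))$. Since $d\phi$ is a fiberwise linear isomorphism, $F'$ inherits regularity, positive $1$-homogeneity and strong convexity from $F$, so $F'$ is a conic Finsler metric on $L/(L\cap H)$; its $L$-invariance follows from the $L$-equivariance of $\phi$ together with the $G$-invariance (hence $L$-invariance) of $F$. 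Finally, since $\phi$ is a local diffeomorphism with $F'=F\circ d\phi$, it is by construction a local isometry from $(L/(L\cap H),F')$ to $(G/H,F)$, which proves the first assertion.

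For the \emph{moreover} clause, assume $\mathfrak{l}\cap\mathfrak{h}=0$. Then $L\cap H$ is a discrete subgroup of $L$, so the quotient map $\pi\colon L\to L/(L\cap H)$ is a covering, in particular a local diffeomorphism, and it intertwines left translations on $L$ with the left $L$-action on $L/(L\cap H)$. Pulling back as before, $F''=F'\circ d\pi$ on $(d\pi)^{-1}(\mathcal{A}')$ is a conic Finsler metric (same argument through a fiberwise linear isomorphism), and it is left invariant by the equivariance of $\pi$. A covering map is a local isometry, so $(L,F'')$ is locally isometric to $(L/(L\cap H),F')$ and hence to $(G/H,F)$.

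I would expect the only delicate bookkeeping to concern the conic domains: one must verify that the preimage of a conic open set under a fiberwise linear isomorphism is again conic and open with nonempty fibers, and that nonemptiness propagates over all of $U$ using the $G$-invariance of $\mathcal{A}$. Everything else is routine — the equivariance statements, the inheritance of the three conic-Minkowski-norm axioms through a linear isomorphism, and the fact that local diffeomorphisms and covering maps are local isometries. A minor but important caveat is that $\phi$ need not be injective (the $L$-orbit may wrap around $G/H$), which is exactly why the conclusion is stated only up to \emph{local} isometry; this causes no trouble in the later applications of the lemma.
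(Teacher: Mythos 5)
Your proposal is correct and follows essentially the same route as the paper, which proves the lemma by exactly this technique (stated informally in the paragraph preceding it): the $L$-orbit map through $o$ is a local diffeomorphism because $\mathfrak{g}=\mathfrak{h}+\mathfrak{l}$ makes $\mathfrak{l}/(\mathfrak{l}\cap\mathfrak{h})\to\mathfrak{g}/\mathfrak{h}$ a linear isomorphism, the pulled-back $F'$ is $L$-invariant, and when $\mathfrak{l}\cap\mathfrak{h}=0$ one lifts further along the covering $L\to L/(L\cap H)$. Your version is in fact more careful than the paper's about the conic domains and the non-injectivity of the orbit map.
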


\subsection{Homogeneous conic Landsberg surface}
\label{subsection-4-2}
In this subsection, we prove Theorem F, i.e., each homogeneous conic Landsberg surface is Berwald.

We will need the following two lemmas for preparation.
\begin{lemma}\label{lemma-11}
Let $(G/H,F)$ be a homogeneous conic Finsler surface, i.e., $\dim G=\dim H+2$, in which $G$ is  connected. Suppose that the $\mathrm{ad}_{\mathfrak{g}/\mathfrak{h}}(\mathfrak{h})$-actions
on $\mathfrak{g}/\mathfrak{h}$ are all nilpotent. Then $\mathfrak{h}$ is an ideal of $\mathfrak{g}$, and $F$ is locally isometric to a left invariant conic Finsler metric on some Lie group.
\end{lemma}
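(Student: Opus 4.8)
The plan is to show that the nilpotency of all $\mathrm{ad}_{\mathfrak{g}/\mathfrak{h}}(X)$ for $X\in\mathfrak{h}$ forces $[\mathfrak{g},\mathfrak{h}]\subseteq\mathfrak{h}$, i.e. that $\mathfrak{h}$ is an ideal, and then to produce the desired left invariant model by applying Lemma \ref{lemma-10}. First I would set up the representation $\rho\colon\mathfrak{h}\to\mathfrak{gl}(\mathfrak{g}/\mathfrak{h})$, $\rho(X)(Y+\mathfrak{h})=[X,Y]+\mathfrak{h}$. Since $\dim\mathfrak{g}/\mathfrak{h}=2$, $\rho(\mathfrak{h})$ is a Lie subalgebra of $\mathfrak{gl}_2$ all of whose elements are nilpotent $2\times2$ matrices. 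By Engel's theorem $\rho(\mathfrak{h})$ is a nilpotent Lie algebra that can be simultaneously strictly upper-triangularized; in the $2$-dimensional case this means there is a line $\ell\subset\mathfrak{g}/\mathfrak{h}$ with $\rho(\mathfrak{h})\ell=0$ and $\rho(\mathfrak{h})(\mathfrak{g}/\mathfrak{h})\subseteq\ell$, and in particular $\rho(\mathfrak{h})^2=0$, so $\dim\rho(\mathfrak{h})\le1$.

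Next I would use the effectiveness hypothesis (recorded in Section 4.1: $\mathfrak{h}$ contains no nonzero ideal of $\mathfrak{g}$) to kill the remaining freedom. If $\rho\equiv0$ then $[\mathfrak{h},\mathfrak{g}]\subseteq\mathfrak{h}$ and we are already done. Otherwise $\dim\rho(\mathfrak{h})=1$; pick $X_0\in\mathfrak{h}$ with $\rho(X_0)\neq0$, so $\ker\rho=\mathfrak{h}_0$ has codimension $1$ in $\mathfrak{h}$, and $\mathfrak{h}_0$ together with the line $\ell$ (pulled back to a plane in $\mathfrak{g}$) is preserved under $\mathrm{ad}(\mathfrak{h})$. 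The point is that $\mathfrak{h}_0=\{X\in\mathfrak{h}: [X,\mathfrak{g}]\subseteq\mathfrak{h}\}$ is then an ideal of $\mathfrak{g}$: it is an ideal of $\mathfrak{h}$ because $[\mathfrak{h},\mathfrak{h}_0]\subseteq[\mathfrak{h},\mathfrak{h}]\cap\mathfrak{h}_0$ (using that $\rho(\mathfrak{h})$ is abelian, so $[\mathfrak{h},\mathfrak{h}]\subseteq\mathfrak{h}_0$), and it is normalized by all of $\mathfrak{g}$ via the Jacobi identity together with $[\mathfrak{h}_0,\mathfrak{g}]\subseteq\mathfrak{h}$. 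Effectiveness then forces $\mathfrak{h}_0=0$, whence $\dim\mathfrak{h}=1$, $\mathfrak{h}=\mathbb{R}X_0$, and we are reduced to a very small explicit case: $\mathfrak{g}$ is $3$-dimensional, $\mathrm{ad}(X_0)$ is nilpotent of rank $1$ on $\mathfrak{g}/\mathfrak{h}$, and $\mathfrak{h}=\mathbb{R}X_0$ contains no nonzero ideal. A short structure-constant computation in a basis $\{X_0,Y_1,Y_2\}$ with $[X_0,Y_1]\in\mathfrak{h}$, $[X_0,Y_2]\equiv Y_1\pmod{\mathfrak{h}}$ shows this configuration is incompatible with effectiveness (one finds $\mathbb{R}X_0$, or a line inside it, forced to be an ideal), so in fact $\rho\equiv0$ after all and $\mathfrak{h}$ is an ideal of $\mathfrak{g}$.

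Once $\mathfrak{h}\lhd\mathfrak{g}$, effectiveness gives $\mathfrak{h}=0$, so $G/H$ has the same dimension as $G$ and $F$ is already (up to the covering $G\to G/H$) a left invariant conic Finsler metric on $G$; more robustly, I would simply invoke Lemma \ref{lemma-10} with $L$ a subgroup whose Lie algebra $\mathfrak{l}$ is a vector-space complement to $\mathfrak{h}$ in $\mathfrak{g}$ that is also a subalgebra — available precisely because $\mathfrak{h}$ is an ideal, so any complementary subspace closed under bracket modulo $\mathfrak{h}$ can be corrected to a genuine subalgebra, and $\mathfrak{l}\cap\mathfrak{h}=0$ — to conclude that $F$ is locally isometric to a left invariant conic Finsler metric on $L$. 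I expect the main obstacle to be the second paragraph: the bookkeeping needed to extract a nonzero ideal of $\mathfrak{g}$ contained in $\mathfrak{h}$ from the assumption $\rho\not\equiv0$, i.e. correctly identifying $\mathfrak{h}_0$ and checking it is $\mathrm{ad}(\mathfrak{g})$-invariant, and then handling the residual low-dimensional case by hand; everything after "$\mathfrak{h}$ is an ideal" is routine given Lemma \ref{lemma-10}.
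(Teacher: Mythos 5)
There is a genuine gap in the second paragraph of your argument, and it is exactly where the real content of the lemma lies. You try to rule out the case $\rho\not\equiv 0$ by a purely Lie-algebraic argument, reducing to $\dim\mathfrak{h}=1$, $\dim\mathfrak{g}=3$ and claiming a structure-constant computation shows this is ``incompatible with effectiveness.'' That claim is false: take $\mathfrak{g}=\mathfrak{sl}(2,\mathbb{R})$ and $\mathfrak{h}=\mathbb{R}X_0$ with $X_0$ a nilpotent matrix (say $X_0=e$ in the standard basis $\{e,h,f\}$, so $[e,f]=h$, $[e,h]=-2e$). Then $\mathrm{ad}_{\mathfrak{g}/\mathfrak{h}}(X_0)$ is nilpotent and nonzero, yet $\mathfrak{h}$ contains no nonzero ideal because $\mathfrak{sl}(2,\mathbb{R})$ is simple. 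So no amount of bracket bookkeeping will close this case; indeed the paper's Subcase 2.3 of Lemma \ref{lemma-12} relies on Lemma \ref{lemma-11} precisely to exclude this $\mathfrak{sl}(2,\mathbb{R})$ configuration, so your argument would be circular even if it could be repaired. The missing idea is that the metric $F$ must enter: in the paper, if $\mathrm{ad}_{\mathfrak{g}/\mathfrak{h}}(\mathfrak{h})e_2\neq 0$, the isotropy orbit $\mathrm{Ad}_{\mathfrak{g}/\mathfrak{h}}(H_0)(\pm e_2)=(\pm e_2)+\mathbb{R}e_1$ is a full affine line, $F$ is constant on it by invariance, hence the indicatrix $F=1$ contains a straight line segment, contradicting strong convexity of the conic Minkowski norm. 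That is the only way Case 2 gets excluded.

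A secondary problem is your use of effectiveness: it is not a hypothesis of Lemma \ref{lemma-11} (the lemma is applied in Subcase 1.1 of Lemma \ref{lemma-12} to $(L/(L\cap H),F')$, where the $L$-action need not be effective), so your conclusion ``effectiveness gives $\mathfrak{h}=0$'' is not available, and in fact the intended conclusion allows $\mathfrak{h}\neq 0$. The paper instead observes that once $\mathfrak{h}$ is an ideal, the identity component $H_0$ is a closed normal subgroup, $G/H_0$ is itself a Lie group, and the covering $G/H_0\to G/H$ pulls $F$ back to a left invariant conic metric on the Lie group $G/H_0$, locally isometric to $(G/H,F)$. Your alternative of choosing a complementary subalgebra $\mathfrak{l}$ to the ideal $\mathfrak{h}$ is also not justified in general: an ideal need not admit a complementary subalgebra, so that route would require additional argument.
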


\begin{proof} Since all $\mathrm{ad}_{\mathfrak{g}/\mathfrak{h}}(\mathfrak{h})$-actions on $\mathfrak{g}/\mathfrak{h}$ are all nilpotent,
we can use the Engel Theorem to find a basis $\{e_1,e_2\}$ of $\mathfrak{g}/\mathfrak{h}=T_o(G/H)$, such that
$\mathrm{ad}_{\mathfrak{g}/\mathfrak{h}}(\mathfrak{h})e_1=0$ $\mathrm{ad}_{\mathfrak{g}/\mathfrak{h}}(\mathfrak{h})e_2\subset\mathbb{R}e_1$.
There are two possibilities.

{\bf Case 1}. $\mathrm{ad}_{\mathfrak{g}/\mathfrak{h}}(\mathfrak{h})e_2=0$. Then $[\mathfrak{h},\mathfrak{g}]\subset\mathfrak{h}$, i.e., $\mathfrak{h}$  is an ideal of $\mathfrak{g}$. Let $H_0$ be the identity component of $H$, then $H_0$ is a closed normal subgroup of $G$. The covering map from $G/H_0$ to $G/H$ induces a $G$-invariant conic Finsler
metric $F'$ on $G/H_0$, such that $(G/H_0,F')$ and $(G/H,F)$ are locally isometric. Since $G/H_0$
is a Lie group itself, the $G$-invariant $F'$ is a left invariant conic Finsler metric on $G/H_0$.

{\bf Case 2}. $\mathrm{ad}_{\mathfrak{g}/\mathfrak{h}}(\mathfrak{h})e_2\neq0$. Let $H_0$ be the identity component of $H$. Then we have the following two orbits for the isotropic $H_0$-action on $T_o(G/H)=\mathfrak{g}/\mathfrak{h}$, $\mathcal{O}_\pm=\mathrm{Ad}_{\mathfrak{g}/\mathfrak{h}}(H_0) (\pm e_2)=(\pm e_2)+\mathbb{R}e_1$, which are two parallel straight lines. Let $\mathcal{A}_o\subset\mathfrak{g}/\mathfrak{h}$ be the non-empty conic open subset of $\mathfrak{g}/\mathfrak{h}\backslash\{0\}$, in which $F$ is defined. Then $\mathcal{A}_o\cap(\mathcal{O}_+\cup\mathcal{O}_-)\neq\emptyset$. Since $F$ is $\mathrm{Ad}_{\mathfrak{g}/\mathfrak{h}}(H)$-invariant, we see the indicatrix $F=1$ contains
at least a straight line. This contradicts the strong convexity of $F$.

To summarize, only Case 1 can happen. This ends the proof of Lemma \ref{lemma-10}.
\end{proof}

\begin{lemma}\label{lemma-13}
Let $G$ be a connected Lie group with $\mathfrak{g}=\mathrm{Lie}(G)$. Then for $u\in\mathfrak{g}$, the sum $\mathfrak{l}$ of linear subspaces $[\mathrm{Ad}(g)u,\mathfrak{g}]$ for all
$g\in G$ is an ideal of $G$.
\end{lemma}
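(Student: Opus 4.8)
The plan is to show that $\mathfrak{l}=\sum_{g\in G}[\operatorname{Ad}(g)u,\mathfrak{g}]$ is an ideal by verifying it is $\operatorname{Ad}(G)$-invariant; since $G$ is connected, an $\operatorname{Ad}(G)$-invariant subspace is automatically an ideal (it is $\operatorname{ad}(\mathfrak g)$-invariant, being the derivative of the $\operatorname{Ad}$-action). First I would observe that $\mathfrak{l}$ is genuinely a linear subspace: it is the span of all vectors $[\operatorname{Ad}(g)u,v]$ with $g\in G$, $v\in\mathfrak g$, hence a subspace of the finite-dimensional $\mathfrak g$.

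Next I would check invariance under each $\operatorname{Ad}(h)$, $h\in G$. Using that $\operatorname{Ad}(h)$ is a Lie algebra automorphism, for any $g\in G$ and $v\in\mathfrak g$ we have
\begin{equation*}
\operatorname{Ad}(h)\big[\operatorname{Ad}(g)u,v\big]=\big[\operatorname{Ad}(h)\operatorname{Ad}(g)u,\operatorname{Ad}(h)v\big]=\big[\operatorname{Ad}(hg)u,\operatorname{Ad}(h)v\big].
\end{equation*}
As $v$ ranges over $\mathfrak g$, so does $\operatorname{Ad}(h)v$ (since $\operatorname{Ad}(h)$ is invertible), and $hg$ ranges over all of $G$ as $g$ does. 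Hence $\operatorname{Ad}(h)$ maps each summand $[\operatorname{Ad}(g)u,\mathfrak g]$ onto the summand $[\operatorname{Ad}(hg)u,\mathfrak g]$, so $\operatorname{Ad}(h)\mathfrak{l}=\mathfrak{l}$. Therefore $\mathfrak{l}$ is $\operatorname{Ad}(G)$-invariant.

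Finally, to pass from $\operatorname{Ad}(G)$-invariance to being an ideal, I would use connectedness of $G$: for $X\in\mathfrak g$ and $Y\in\mathfrak{l}$, the curve $t\mapsto\operatorname{Ad}(\exp(tX))Y$ lies in $\mathfrak{l}$ for all $t$, and differentiating at $t=0$ gives $[X,Y]\in\mathfrak{l}$ because $\mathfrak{l}$ is a closed (finite-dimensional) subspace. Thus $[\mathfrak g,\mathfrak{l}]\subseteq\mathfrak{l}$, i.e.\ $\mathfrak{l}$ is an ideal of $\mathfrak g$. There is no real obstacle here; the only point requiring a little care is the bookkeeping that the reindexing $g\mapsto hg$ together with $v\mapsto\operatorname{Ad}(h)v$ sends the defining family of subspaces bijectively to itself, and the standard fact that for a connected Lie group $\operatorname{Ad}(G)$-invariance of a subspace is equivalent to $\operatorname{ad}(\mathfrak g)$-invariance.
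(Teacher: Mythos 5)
Your proof is correct, and it takes a genuinely different route from the paper's. The paper argues infinitesimally along the adjoint orbit: it notes that $[\mathrm{Ad}(g)u,\mathfrak{g}]$ is the tangent space of $\mathrm{Ad}(G)u$ at $\mathrm{Ad}(g)u$, uses connectedness to conclude $\mathrm{Ad}(G)u\subset u+\mathfrak{l}$, extracts $[w,[w,u]]\in\mathfrak{l}$ from the second-order term of $\mathrm{Ad}(\exp tw)u=u+t[w,u]+\tfrac{t^2}{2}[w,[w,u]]+o(t^2)$, and then polarizes to obtain $[w_1,[w_2,u]]\in\mathfrak{l}$, having first reduced the ideal condition to controlling such double brackets. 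You instead exploit that each $\mathrm{Ad}(h)$ is a Lie algebra automorphism with $\mathrm{Ad}(h)\bigl[\mathrm{Ad}(g)u,v\bigr]=\bigl[\mathrm{Ad}(hg)u,\mathrm{Ad}(h)v\bigr]$, so that $\mathrm{Ad}(h)$ permutes the defining summands and $\mathfrak{l}$ is $\mathrm{Ad}(G)$-invariant; differentiating $t\mapsto\mathrm{Ad}(\exp tX)Y$ at $t=0$ then gives $[\mathfrak{g},\mathfrak{l}]\subset\mathfrak{l}$. Your argument is shorter, avoids both the Taylor expansion and the polarization identity, and the implication you actually use ($\mathrm{Ad}(G)$-invariance implies $\mathrm{ad}(\mathfrak{g})$-invariance) holds without connectedness, so your proof in fact establishes the lemma for an arbitrary Lie group, whereas the paper's proof genuinely uses connectedness to keep the orbit inside $u+\mathfrak{l}$. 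The only place connectedness appears in your write-up is the parenthetical claim of equivalence between the two invariances, which you do not need in that strength.
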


\begin{proof} Each vector in $\mathfrak{l}$ is a finite linear combination $v=[w_1,u_1]+\cdots+[w_k,u_k]$, in which each $w_i\in\mathfrak{g}$ and each $u_i=\mathrm{Ad}(g_i)u$ for some $g_i\in G$. So to prove $\mathfrak{l}$ is an ideal, i.e., check $[\mathfrak{g},v]\subset\mathfrak{l}$, we only need to verify $[\mathfrak{g},[\mathfrak{g},u_i]]\subset\mathfrak{l}$ for each $i$. Proving $[\mathfrak{g},[\mathfrak{g},u]]\subset\mathfrak{l}$ is enough.

For each $g\in G$, $[\mathrm{Ad}(g)u,\mathfrak{g}]$ is the tangent space of the orbit $\mathrm{Ad}(G)u$ at $\mathrm{Ad}(g)u$. Since each tangent space of $\mathrm{Ad}(G)u$ is contained in $\mathfrak{l}$, and $G$ is connected, we have $\mathrm{Ad}(G)u\subset u+\mathfrak{l}$. For any $w\in\mathfrak{g}$,
$$\mathrm{Ad}(\exp tw) u=\exp \mathrm{ad}(tw)u=u+t[w,u]+\tfrac{t^2}{2}[w,[w,u]]+o(t^2)\in u+\mathfrak{l},$$
so $[w,[w,u]]\subset\mathfrak{l}$ for each $w\in\mathfrak{g}$.

For any $w_1$ and $w_2$ in $\mathfrak{g}$, we have seen that $[w_1,[w_1,u]]$, $[w_2,[w_2,u]]$, $[w_1+w_2,[w_1+w_2,u]]$ and $[[w_1,w_2],u]\in[\mathfrak{g},u]$ all belong to $\mathfrak{l}$. So
\begin{eqnarray*}
[w_1,[w_2,u]]=\tfrac{1}{2}([w_1+w_2,[w_1+w_2,u]]+[[w_1,w_2],u]-[w_1,[w_1,u]]-[w_2,[w_2,u]])
\end{eqnarray*}
also belongs to $\mathfrak{l}$, which ends the proof of Lemma \ref{lemma-13}.
\end{proof}

Let $(G/H,F)$ be a homogeneous conic Finsler surface, i.e., $\dim G=\dim H+2$.
Without loss of generality, we may asssume $G$ is connected and its action on $G/H$ is effective.
The following lemma can help us simplify the manifold.


\begin{lemma} \label{lemma-12}
$F$ is locally isometric to a Riemmanian symmetric metric on a 2-sphere or a left invariant conic Finsler metric on some Lie group.
\end{lemma}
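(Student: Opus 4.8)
The plan is to run a structural analysis of the pair $(\mathfrak{g},\mathfrak{h})$ with $\dim\mathfrak{g}=\dim\mathfrak{h}+2$, using the effectiveness hypothesis (so $\mathfrak{h}$ contains no nonzero ideal of $\mathfrak{g}$), and reduce every case to either a $2$-dimensional Lie group or the round $2$-sphere via Lemma \ref{lemma-10}. First I would look at the isotropy representation $\rho\colon\mathfrak{h}\to\mathfrak{gl}(\mathfrak{g}/\mathfrak{h})\cong\mathfrak{gl}_2(\mathbb{R})$. The $\mathrm{Ad}_{\mathfrak{g}/\mathfrak{h}}(H)$-invariance of the conic Minkowski norm $F(o,\cdot)$ forces the image of the \emph{identity component} $H_0$ under the isotropy action to preserve a strongly convex conic indicatrix; an argument exactly like Case 2 of Lemma \ref{lemma-11} (a nontrivial unipotent or a nontrivial "boost"/diagonalizable orbit is a straight line or a pair of parallel lines or an unbounded branch, contradicting strong convexity of a conic norm, and an invariant scaling is likewise impossible since it would force the indicatrix to be scaling-invariant) shows the connected isotropy image is either trivial or a rotation subgroup $\mathrm{SO}(2)$.

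If the connected isotropy image is $\mathrm{SO}(2)$: then $F(o,\cdot)$ is an $\mathrm{SO}(2)$-invariant conic Minkowski norm on $\mathbb{R}^2$, hence (by $1$-homogeneity and rotational invariance, extended by continuity across the whole plane) it is a genuine Euclidean norm, so $(G/H,F)$ is Riemannian. A connected homogeneous Riemannian surface has constant curvature or is a product/left-invariant metric; more directly, since the isotropy contains $\mathrm{SO}(2)$ the metric is isotropic at $o$, hence of constant curvature, so $(G/H,F)$ is locally isometric to $S^2$, $\mathbb{R}^2$, or $H^2$ — and $\mathbb{R}^2$, $H^2$ carry left-invariant (abelian, resp. the $2$-dimensional non-abelian Lie group) metrics, so we land in the allowed list. (I would phrase "Riemannian symmetric metric on a $2$-sphere" to cover the $S^2$ case.)

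If the connected isotropy image is trivial: then $\mathrm{ad}_{\mathfrak{g}/\mathfrak{h}}(\mathfrak{h})=0$, i.e. $[\mathfrak{h},\mathfrak{g}]\subseteq\mathfrak{h}$, so $\mathfrak{h}$ is an ideal; by effectiveness $\mathfrak{h}=0$, hence $\dim G=2$ and $F$ is already a left-invariant conic Finsler metric on the $2$-dimensional Lie group $G$ (abelian or not). In fact this sub-case is already covered by Lemma \ref{lemma-11} Case 1. To organize the write-up cleanly I would state: either the isotropy representation of $\mathfrak{h}$ is nontrivial, in which case its action (on some one-parameter subgroup) is, up to conjugacy in $\mathfrak{gl}_2(\mathbb{R})$, nilpotent, hyperbolic, or elliptic; the nilpotent and hyperbolic possibilities are killed by Lemmas \ref{lemma-11}/strong convexity exactly as above, leaving the elliptic case which gives the Riemannian (hence $S^2$/$\mathbb{R}^2$/$H^2$) conclusion; or the isotropy representation is trivial, giving the $2$-dimensional Lie group conclusion. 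Throughout, whenever I produce a candidate $\mathfrak{l}\subseteq\mathfrak{g}$ with $\mathfrak{g}=\mathfrak{l}+\mathfrak{h}$ (e.g. a $2$-dimensional complement stable under the relevant structure, manufactured with the help of Lemma \ref{lemma-13} applied to a suitable $u\in\mathfrak{g}\setminus\mathfrak{h}$ to locate an ideal transverse to $\mathfrak{h}$), I apply Lemma \ref{lemma-10} to descend to a left-invariant metric on the Lie group $L$.

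The main obstacle I anticipate is the bookkeeping in the "trivial connected isotropy image but $H$ disconnected" situation and, more seriously, the non-existence of a reductive decomposition in general: when the isotropy image is a possibly-disconnected group whose identity component is trivial, $\mathfrak{h}$ is still an ideal and effectiveness kills it, so that is fine; the real work is verifying that in the elliptic case one genuinely gets \emph{constant} curvature rather than merely a homogeneous Riemannian surface, and handling the subtlety that a conic Minkowski norm invariant under a full rotation group need not a priori extend smoothly across all of $\mathbb{R}^2$ — I would resolve this by noting that rotational invariance plus $1$-homogeneity pins it down to $c\|\cdot\|$ on its conic domain, which does extend. A secondary subtlety is that Lemma \ref{lemma-13} must be invoked carefully to guarantee that the ideal it produces is either $0$, all of $\mathfrak{g}$, or meets $\mathfrak{h}$ trivially, so that the reduction via Lemma \ref{lemma-10} actually applies; ruling out the intermediate-dimensional possibilities is where a short case check on $\dim\mathfrak{l}$ (using $\dim\mathfrak{g}=\dim\mathfrak{h}+2$) will be needed.
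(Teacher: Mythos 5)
There is a genuine gap at the heart of your argument: the claimed dichotomy that the connected isotropy image in $\mathrm{GL}(\mathfrak{g}/\mathfrak{h})$ is either trivial or a rotation group $\mathrm{SO}(2)$ is false for \emph{conic} Minkowski norms. Your exclusion of the diagonalizable and spiral cases rests on the idea that an unbounded orbit branch contradicts strong convexity, but a conic Minkowski norm is only required to be defined and strongly convex on a conic open subset, where unbounded indicatrix branches are perfectly admissible. In fact, Section \ref{subsection-3-4} of this very paper exhibits strongly convex conic norms whose indicatrices are exactly the orbits you are trying to rule out: a spiral segment (invariant under $\exp\bigl(t\bigl(\begin{smallmatrix}\lambda_1 & -\lambda_2\\ \lambda_2 & \lambda_1\end{smallmatrix}\bigr)\bigr)$ with $\lambda_1\neq0$) and the graph $y^1=c(y^2)^{\mu}$ with $\mu>1$ (invariant under $\mathrm{diag}(e^{\lambda_1 t},e^{\lambda_2 t})$ with $\lambda_1/\lambda_2=\mu$, eigenvalues of the \emph{same} sign). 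A concrete counterexample to your dichotomy: take $\mathfrak{g}=\mathbb{R}h\ltimes\mathbb{R}^2$ with $\mathrm{ad}(h)|_{\mathbb{R}^2}=\mathrm{diag}(1,2)$ and $\mathfrak{h}=\mathbb{R}h$; this is effective, the connected isotropy image is a one-parameter group that is neither trivial nor elliptic, and it preserves a strongly convex conic norm. (A direct computation shows that only the \emph{opposite-sign} eigenvalue case, i.e.\ a hyperbola-type orbit, violates strong convexity --- which is precisely the only diagonalizable case the paper excludes, in Subcase 2.3 for $\mathfrak{sl}(2,\mathbb{R})$ where the eigenvalues on $\mathfrak{g}/\mathfrak{h}$ are forced to be $\pm2\lambda$.)

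This is not a repairable detail within your framework: in the missed cases the invariant norm is genuinely non-Riemannian, so your ``isotropy contains $\mathrm{SO}(2)$ $\Rightarrow$ Euclidean norm $\Rightarrow$ constant curvature'' conclusion is unavailable, and the trivial-isotropy branch does not apply either. The paper instead avoids classifying the isotropy image altogether: it runs structure theory on $\mathfrak{g}$ (a nonzero abelian ideal $\mathfrak{r}^{(k)}$ from the derived series of the radical when $\mathfrak{g}$ is not semisimple, and a case analysis on compact ideals, $\mathfrak{sl}(2,\mathbb{R})$, and non-simple semisimple algebras otherwise), in each case manufacturing a subalgebra $\mathfrak{l}$ with $\mathfrak{g}=\mathfrak{l}+\mathfrak{h}$ and descending via Lemma \ref{lemma-10}, with Lemma \ref{lemma-11} handling nilpotent isotropy and Lemma \ref{lemma-13} used to dodge the effectiveness obstruction in the noncompact simple case. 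In your example above, the correct move is to pass to the transversal abelian ideal $\mathbb{R}^2$ and conclude via Lemma \ref{lemma-10} that $F$ is locally a left invariant (conic Minkowski) metric on $\mathbb{R}^2$ --- which is what Subcase 1.2 of the paper's proof does. You would need to rebuild your argument around such subalgebra reductions rather than around the isotropy dichotomy.
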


\begin{proof} All possibilities for the Lie algebra $\mathfrak{g}=\mathrm{Lie}(G)$ are the following.

{\bf Case 1}. The radical $\mathfrak{r}$, i.e., the maximal solvable subalgebra of $\mathfrak{g}$ is nonzero. Denote by $\mathfrak{r}^{(0)}=\mathfrak{r}$, $\mathfrak{r}^{(i)}=
[\mathfrak{r}^{(i-1)},\mathfrak{r}^{(i-1)}]$, $\forall i>0$, the derived sequence of $\mathfrak{t}$. Each $\mathfrak{r}^{(i)}$ is an ideal of $\mathfrak{g}$. There exists a
non-negative integer $k$, such that $\mathfrak{r}^{(k)}\neq0$ is an Abelian ideal of $\mathfrak{g}$.
Obviously we have $\dim(\mathfrak{r}^{(k)}\cap\mathfrak{h})\geq\dim\mathfrak{r}^{(k)}-2$.
By the effectiveness assumption, $\mathfrak{r}^{(k)}$ is not contained in $\mathfrak{h}$, so
there are two subcases.

{\bf Subcase 1.1}. $\dim (\mathfrak{r}^{(k)}\cap\mathfrak{h})=\dim\mathfrak{r}^{(k)}-1$. Then we can find a 2-dimensional linear subspace $\mathrm{V}\subset\mathfrak{g}$, such that $\mathrm{V}\cap\mathfrak{h}=0$ and
$\dim\mathrm{V}\cap\mathfrak{r}^{(k)}=1$. Then $\mathfrak{l}=\mathrm{V}+\mathfrak{r}^{(k)}$
is Lie subalgebra of $\mathfrak{g}$ with $\dim\mathfrak{l}-\dim(\mathfrak{l}\cap\mathfrak{h})=2$.
Let $L$ be the connected Lie subgroup of $G$ generated by $\mathfrak{l}$, then by Lemma \ref{lemma-10}, $F$ is locally isometric to a $L$-invariant conic Finsler metric on $L/(L\cap H)$.
The $\mathrm{ad}_{\mathfrak{l}/(\mathfrak{l}\cap\mathfrak{h})}(\mathfrak{l}\cap\mathfrak{h})
$-action on $\mathfrak{l}/(\mathfrak{l}\cap\mathfrak{h})$ is nilpotent. So by Lemma \ref{lemma-11}, $F$ is locally isometric to a left invariant conic Finsler metric on some Lie group.

{\bf Subcase 1.2}. $\dim (\mathfrak{r}^{(k)}\cap\mathfrak{h})=\dim\mathfrak{r}^{(k)}-2$.
Let $\mathfrak{l}$ be any linear complement of $\mathfrak{r}^{(k)}\cap\mathfrak{h}$ in
$\mathfrak{r}^{(k)}$. Then it is a Lie subalgebra of $\mathfrak{g}$, which generates a connected Lie subgroup $L$ of $G$. By Lemma \ref{lemma-10}, $F$ is locally isometric to a left invariant conic Finsler metric on $L$.

{\bf Case 2}. $\mathfrak{g}$ is semi simple. There are four subcases.

{\bf Subcase 2.1}. $\mathfrak{g}$ has a nonzero compact simple ideal $\mathfrak{g}_1$. By the effectiveness of the $G$-action,  $\dim(\mathfrak{g}_1\cap\mathfrak{h})<\dim\mathfrak{g}_1$.  Meanwhile, we have
$\dim(\mathfrak{g}_1\cap\mathfrak{h})\geq \dim\mathfrak{g}_1-2$. So we must have
$\dim(\mathfrak{g}_1\cap\mathfrak{h})=1$ and $\mathfrak{g}_1=su(2)$ in this case. By Lemma \ref{lemma-10}, $F$ is locally isometric to a Riemannian symmetric metric on a 2-sphere.

{\bf Subcase 2.2}. $\mathfrak{g}$ is noncompact and simple, and $\mathfrak{g}$ is not isomorphic to $\mathfrak{sl}(2,\mathbb{R})$. Then it has a nonzero compact simple Lie subalgebra $\mathfrak{k}$. Let $u$ be any nonzero vector in $\mathfrak{k}$. Then there exists $g\in G$, such that $\mathrm{Ad}(g)u$ is not contained in $\mathfrak{h}$, because otherwise, by Lemma \ref{013},
$\mathfrak{h}$ contains a nonzero ideal of $\mathfrak{g}$, which contradicts the effectiveness assumption. So we can assume that the compact simple Lie subalgebra $\mathfrak{l}=\mathrm{Ad}(g)(\mathfrak{k})$ for some $g\in G$
is not contained in $\mathfrak{h}$.
For similar reasons as in Subcase 2.1, we can apply Lemma \ref{lemma-10} to the compact simple Lie group generated by
$\mathfrak{l}=\mathrm{Ad}(g)\mathfrak{k}$ and show that $F$ is locally isometric to a Riemannian symmetric metric on a 2-sphere. This is enough, though we believe we could get a contradiction in this subcase.

{\bf Subcase 2.3}. $\mathfrak{g}=sl(2,\mathbb{R})$. In this case $\dim\mathfrak{h}=1$. Let $u$ be any nonzero element in $\mathfrak{h}$. It can not be a nilpotent matrix in $sl(2,\mathbb{R})$, because otherwise the $\mathrm{ad}_{\mathfrak{g}/\mathfrak{h}}(\mathfrak{h})$-action on $\mathfrak{g}/\mathfrak{h}$ is nilpotent, and then $\mathfrak{h}$ is an ideal of $\mathfrak{g}$ by
Lemma \ref{lemma-11}, which is a contradiction. It can not be a semi simple matrix with real eigenvalues, because otherwise by the $\mathrm{Ad}(H)$-invariancy of $F$, the indicatrix $F=1$ in $\mathfrak{g}\backslash\mathfrak{h}$ contains a segment of hyperbolic curve centered at the origin, which
contradicts the strong convexity of $F$. To summarize, $u$ must be a semi simple matrix with purely imaginary eigenvalues. So $\mathfrak{h}$ has a zero intersection with the subalgebra $\mathfrak{l}$
of all upper triangular matrices in $sl(2,\mathbb{R})$. Lemma \ref{lemma-10} tells us that $F$ is
locally isometric to a left invariant conic Finsler metric on the Lie subgroup generated by $\mathfrak{l}$.

{\bf Subcase 2.4} $\mathfrak{g}$ is a semi simple Lie algebra of noncompact type, and it is not simple.
Let $\mathfrak{g}_1$ be a simple ideal of $\mathfrak{g}$. By the effectiveness assumption, $\dim\mathfrak{g}_1\cap\mathfrak{h}<\dim\mathfrak{g}_1$. If $\mathfrak{g}_1\cap\mathfrak{h}\leq\mathfrak{g}_1-2$, then the equality must happen, and by similar argument as in Subcase 2.2 or Subcase 2.3, we can see that $F$ is locally isometric to a Riemannian symmetric metric on a 2-sphere or a left invariant conic Finsler metric on a Lie group.
Now suppose $\dim(\mathfrak{g}_1\cap\mathfrak{h})=\dim\mathfrak{g}_1-1$. We can find
a 2-dimensional linear subspace $\mathrm{V}$ of $\mathfrak{g}$, such that $\mathrm{V}\cap\mathfrak{h}=0$ and $\dim\mathrm{V}\cap\mathfrak{g}_1=1$. Then $\mathfrak{g}'=\mathfrak{g}_1+\mathrm{V}$ is a Lie subalgebra of $\mathfrak{g}$, and
$\dim(\mathfrak{g}'\cap\mathfrak{h})=\dim\mathfrak{g}-2$, i.e., $\mathfrak{g}'\cap\mathfrak{h}\subset\mathfrak{g}_1$. There exists $v\in\mathrm{V}\backslash\{0\}$, such that $\mathfrak{g}'=\mathfrak{g}_1\oplus\mathbb{R}v$ is
a Lie algebra decomposition. Take any $u\in\mathfrak{g}_1\backslash\mathfrak{h}$,
then $\mathfrak{l}=\mathrm{span}\{u,v\}$ is a 2-dimensional Abelian Lie subalgebra such that $\mathfrak{l}\cap\mathfrak{h}=0$. By Lemma \ref{lemma-10}, $F$ is locally isometric to
a left invariant conic Finsler metric on the Lie subgroup generated by $\mathfrak{l}$.

To summarize, above case-by-case discussion has exhausted all possibilities for the Lie group $G$, and proved
Lemma \ref{lemma-12} in each case.
\end{proof}
\medskip

\begin{proof}[Proof of Theorem F]
Let $(M,F)$ be a homogeneous conic Landsberg surface. By Lemma \ref{lemma-12}, it is locally isometric to one of the following:
 \begin{enumerate}
 \item a Riemannian symmetric metric on a 2-sphere;
 \item a left invariant conic Finsler metric on a 2-dimensional Abelian Lie group;
 \item a left invariant conic Finsler metric on a 2-dimensional non-Abelian Lie group.
 \end{enumerate}
In the first two cases, the metric is either Riemannian or local (conic) Minkowski. Tt is Berwald in these two cases. In the third case, we use the Landsberg assumption and Theorem D to get $F$ is Berwald. To summarize, we have verified Berwald property of $F$ for each possibility. This ends the proof of Theorem E.
\end{proof}

\bigskip

\noindent{\bf Acknowledgement}\quad
This paper is supported by Beijing Natural Science Foundation (1222003) and
National Natural Science Foundation of China (12131012, 11821101). The author sincerely thank Akbar Tayebi for the helpful discussion which inspired this work.

\end{document}